\theoremstyle{plain}
\numberwithin{equation}{section}
\newtheorem{theorem}{Theorem}[section]
\newtheorem{Cor}{Corollary}[section]
\newtheorem{lemma}[theorem]{Lemma}
\newtheorem{Rem}{Remark}[section]
\date{}
\address{Qing-Ming Cheng: Department of Mathematics,  Graduate School of Science and Engineering,  
Saga University, Saga 840-8502,  Japan.  e-mail: cheng@cc.saga-u.ac.jp}
\address{
Hongcang Yang:
Academy of Mathematics and Systematical Sciences,
CAS,  Beijing 100080, China.  e-mail: yanghc2@netease.com}
\title
[Universal bounds for eigenvalues]{Universal bounds for  eigenvalues  \\ of  a  buckling problem II }
\author[Qing-Ming Cheng and Hongcang Yang]{Qing-Ming Cheng* and Hongcang Yang**}
\begin{document}

\maketitle

\vspace*{8 true mm}

\begin{abstract}
In this paper, we investigate universal estimates for eigenvalues of  a buckling problem. 
For a bounded domain in a Euclidean space, we give a positive contribution for obtaining
a sharp universal inequality for eigenvalues of the buckling problem. 
 For a domain in the unit sphere, we give an important improvement on the
results of Wang and Xia \cite{wx}.
\end{abstract}

\footnotetext{{\it Key words and phrases}:
 universal estimates for  eigenvalues, a biharmonic
operator and a buckling problem.}
\footnotetext{2001 \textit{Mathematics Subject
Classification}: 35P15, 58G25, 53C42.}

\footnotetext{* Research partially Supported by a Grant-in-Aid for
Scientific Research from JSPS.}
\footnotetext{** Research partially Supported by SF of CAS.}

\vspace{8mm}
\section{ Introduction}

Let $M$ be an $n$-dimensional complete Riemannian manifold
and $\Omega \subset M$ a bounded domain in $M$ with piecewise 
smooth boundary $\partial \Omega$. 
 A Dirichlet eigenvalue problem of  Laplacian is given by 
 \begin{equation}
\left\{
     \begin{array}{ll}
     \triangle u = -\lambda u,& \ \ {\rm in} \ \ \Omega ,\\
     u=0 , & \ \ {\rm on}  \ \ \partial \Omega, 
     \end{array}
     \right.
\end{equation}
which is also called {\it a fixed membrane problem}, where
$\Delta$ denotes the Laplacian on $M$.
The spectrum of this eigenvalue problem is  real and  discrete.
 
 The following  eigenvalue problem of a biharmonic operator is called {\it a buckling problem}:
\begin{equation}
\left \{\aligned \Delta^2u  =&-\Lambda \Delta u \quad in  \ \ \Omega ,\\
u |_{\partial \Omega}=&\left .\frac {\partial u}{\partial \nu}
\right|_{\partial \Omega} =0,
\endaligned \right. 
\end{equation}
which describes the critical buckling load of a 
clamped  plate subjected to a  uniform compressive force around its boundary,
where $\nu$ is the outward unit normal vector field of the boundary $\partial \Omega$.
It is known that the spectrum of the buckling problem is also real and discrete. 
 
 When $\Omega \subset \mathbf  R^n$ be a bounded domain in an 
 $n$-dimensional Euclidean space $\mathbf  R^n$,  Payne, P\'olya and Weinberger \cite{ppw1} and \cite{ppw2}
 proved the  following inequality  for  eigenvalues of the eigenvalue problem (1.1):  
 for $k=1, 2, \cdots$,
 \begin{equation}
 \lambda_{k+1}-\lambda_k
 \leq \frac4{kn}\sum_{i=1}^k\lambda_i.
 \end{equation}
One  calls it  a universal inequality  since  it  does not depend on the 
 domain $\Omega$.

On the other hand,  Payne, P\'olya and Weinberger \cite{ppw1} and \cite{ppw2} also studied
eigenvalues of the buckling problem on  a bounded domain $\Omega$ in $\mathbf {R}^n$
and intended to derive a universal 
inequality for eigenvalues of the buckling problem. But 
it is very hard to deal with  this problem.  They only proved, for $n=2$,
$$
\Lambda_2\leq 3\Lambda_1.
$$
As an open problem,  Payne, P\'olya and Weinberger \cite{ppw1} and \cite{ppw2} proposed the following:

\vskip 5mm
\noindent
{\bf Problem}. {\it 
Whether can one obtain a universal inequality for 
 eigenvalues of the buckling problem {\rm (1.2)} on a bounded domain in a Euclidean space, 
 which is similar   to the 
 universal inequality  {\rm (1.3)}  for the eigenvalues of the fixed membrane 
 problem }(1.1) ?

\vskip 5mm

\noindent
For lower order eigenvalues, Hile and Yeh \cite{hy} and so on improved the result of 
Payne, P\'olya and Weinberger to 
$$
\Lambda_2\leq \frac{n^2+8n+20}{(n+2)^2}\Lambda_1.
$$ 
Furthermore, Ashbaugh \cite{ash2} (cf. \cite{ash1}) has obtained
\begin{equation*}
\sum_{i=1}^n \Lambda_{i+1} \le (n+4 )\Lambda_1
\end{equation*}
and he has commented that {\it to obtain a universal inequality for eigenvalues of the
buckling problem remains a challenge for mathematicians since 1955}. 
Many mathematicians have intended to attack  
this problem, but it remains open for almost 50 years. 

As one know that in order to obtain  a universal inequality for  eigenvalues of the buckling problem,
it is a key to find appropriate trial functions.  Cheng and Yang \cite{cy2}, by introducing a new method 
to  construct  trial functions for the buckling problem,  have obtained 
the following  universal inequality for   eigenvalues  of the buckling problem (1.2):
\begin{equation}
\sum_{i=1}^k(\Lambda_{k+1}-\Lambda_i)^2\le
\frac {4(n+2)}{n^2}\sum_{i=1}^k
(\Lambda_{k+1}-\Lambda_i)\Lambda_i.
\end{equation}
 Thus, the  problem  proposed by  Payne, P\'olya and Weinberger has been solved affirmatively.
By making use of the asymptotic formula of Weyl for eigenvalues of the Dirichlet  eigenvalue problem of the Laplacian and  one of Agmon \cite{ag}  and Pleijel \cite{ple} for  eigenvalues of the clamped plate  problem,
we can have the asymptotic formula of eigenvalues for the buckling problem according to the variational 
characterization for eigenvalues of the buckling problem:
\begin{equation}
\Lambda_k\sim
\dfrac{4\pi^2}{(\omega_n\text{vol}\Omega)^{\frac2n}}k^{\frac2n}, \ \ k\to \infty,
\end{equation}
where $\omega_n$ denotes the volume of the unit ball in $\mathbf R^n$. By the results of Li and Yau \cite{ly} and 
the variational  characterization for eigenvalues, 
one can obtain a  lower bound for eigenvalues of the buckling problem (cf. Levine and Protter \cite{lp}):
\begin{equation}
\dfrac1k\sum_{j=1}^k\Lambda_j\geq
\dfrac{n}{n+2}\dfrac{4\pi^2}{(\omega_n\text{vol}\Omega)^{\frac2n}}k^{\frac2n}.
\end{equation}

On the other hand, by making use of the  recursion formula in \cite{cy3},  one can obtain an upper bound for eigenvalues of the buckling problem, which is sharp in the sense of the order of $k$, if one can get  a sharp universal inequality for eigenvalues of the buckling problem as the following (cf. \cite{cy2}): 
\vskip 5mm
\noindent
{\bf Conjecture}. {\it  Eigenvalues of the buckling problem on a bounded 
 domain in a Euclidean space $ \mathbf {R}^n$ satisfy
 the following universal inequality}:
$$
\sum_{i=1}^k(\Lambda_{k+1}-\Lambda_i)^2\le\frac 4n
\sum_{i=1}^k(\Lambda_{k+1}-\Lambda_i)\Lambda_i.
$$
\vskip 5mm

\noindent
Therefore, the next  landmark  goal for  the study on eigenvalues of the buckling problem will be to prove 
the above sharp universal inequality.

In \cite{cy2}, we decompose $x^p \nabla u_i$ into 
\begin{equation}
x^p \nabla u_i= \nabla h_{pi} + {\bf w}_{pi},
\end{equation}
where the notations used may be found  in section 2.  We make use of the function $h_{ip}$ 
to construct appropriate trial functions. In order to get our universal inequality, we 
 estimated $L^2$-norm of ${\bf w}_{pi}$ in \cite{cy2}. As one knows that to find
 new appropriate trial functions is very difficult,  many years were spent
 for constructing appropriate trial functions in \cite{cy2}. In this paper, we shall also use 
 the trial functions constructed in \cite{cy2} and  our main observation is to introduce  
 new functions $q_{pi}$ and a careful exploitation of  
 $\nabla q_{pi} =\nabla(x^p u_i- h_{pi})$ and $\Delta{\bf w}_{pi}$. 
 Furthermore, the estimate on lower bound of  $L^2$-norm of $\nabla q_{pi}$ will play 
 an important role in the proof of our theorem 1.1.
 In this paper, we will prove
  $$
\Lambda_i\sum_{p=1}^n\|\nabla q_{pi}\|^2\geq \dfrac53.
$$
 If one can prove that $L^2$-norm of $\nabla q_{pi}$ satisfies
\begin{equation}
\Lambda_i\sum_{p=1}^n\|\nabla q_{pi}\|^2\geq 3,
\end{equation}
then the sharp universal inequality  in the above conjecture will be obtained
(see the remark 2.1 in section 2).
In order  to prove the inequality (1.8), we have spent several years. But we still 
can not prove it. Hence, we  hope to share our new ideas with mathematicians
who are interested in this field such that  the landmark goal in the study on 
eigenvalues of the buckling problem will be realized finally,
which also is one of our main purposes to publish this paper.

\vskip 2mm
\noindent
\begin{theorem} Let $\Lambda_i$ be the $i$-th eigenvalue 
of the buckling problem {\rm (1.2)} for a bounded domain 
$\Omega \subset \mathbf {R}^n$. Then, we have
\begin{equation}
\sum_{i=1}^k(\Lambda_{k+1}-\Lambda_i)^2\le
\frac {4(n+\frac43)}{n^2}\sum_{i=1}^k
(\Lambda_{k+1}-\Lambda_i)\Lambda_i.
\end{equation}
\end{theorem}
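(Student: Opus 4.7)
The plan is to follow the trial-function construction of Cheng--Yang \cite{cy2} through the first several steps and to inject a sharper auxiliary estimate only at the final algebraic stage. Specifically, I would reuse the decomposition $x^p \nabla u_i = \nabla h_{pi} + \mathbf{w}_{pi}$ of \cite{cy2}, set $q_{pi} := x^p u_i - h_{pi}$, and build trial functions by projecting $q_{pi}$ onto the orthogonal complement of $\{u_1,\dots,u_k\}$ with respect to the inner product $(u,v) \mapsto \int_\Omega \nabla u \cdot \nabla v$ that governs the variational characterization of the buckling eigenvalues. Plugging these into $\Lambda_{k+1}\int_\Omega |\nabla \varphi|^2 \leq \int_\Omega (\Delta \varphi)^2$, multiplying by $(\Lambda_{k+1}-\Lambda_i)^2$, summing over $i$ and $p$, and performing the Cauchy--Schwarz and integration-by-parts manipulations of \cite{cy2} produce an intermediate chain whose final simplification hinges on a lower bound for $\Lambda_i \sum_{p=1}^n \|\nabla q_{pi}\|^2$.

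The crucial new ingredient is the improved bound $\Lambda_i \sum_{p=1}^n \|\nabla q_{pi}\|^2 \geq \tfrac{5}{3}$ advertised in the introduction. To derive it I would exploit the double identity $\nabla q_{pi} = x^p \nabla u_i + u_i \, e_p - \nabla h_{pi} = \mathbf{w}_{pi} + u_i \, e_p$, where $e_p$ denotes the $p$-th coordinate vector. Expanding $\sum_p \|\nabla q_{pi}\|^2$ via this decomposition breaks naturally into three pieces: a $\sum_p \|\mathbf{w}_{pi}\|^2$ contribution, the summand $n \int_\Omega u_i^2$, and a cross term pairing $u_i$ with $\mathbf{w}_{pi}$. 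The buckling equation $\Delta^2 u_i = -\Lambda_i \Delta u_i$ together with the Dirichlet boundary conditions on $u_i$ converts each of these into quantities controllable by $\int_\Omega |\nabla u_i|^2 = 1$ and $\Lambda_i$, via repeated integration by parts in which all boundary contributions vanish. The paper's stated methodological novelty---a careful use of $\Delta \mathbf{w}_{pi}$---should enter here to avoid wasteful estimates, yielding the constant $\tfrac{5}{3}$ rather than the value $1$ that would reproduce the older \cite{cy2} coefficient.

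Feeding $\tfrac{5}{3}$ back into the Yang-type chain tightens the numerator constant from $2$ to $\tfrac{4}{3}$; in fact, a quick check of the algebra shows that the general correspondence between the available auxiliary lower bound $\alpha$ and the resulting coefficient in Theorem~1.1 is $\tfrac{4(n+3-\alpha)}{n^2}$, so that the conjectured sharp value $\alpha = 3$ would yield exactly $\tfrac{4}{n}$. I expect the derivation of $\tfrac{5}{3}$ to be the main obstacle: balancing the divergence-free remainder $\mathbf{w}_{pi}$ against the gradient part $\nabla h_{pi}$ while tracking the contribution of $\Delta \mathbf{w}_{pi}$ is delicate, and improving the constant further toward the ideal $3$ would require either sharper orthogonality information about $\mathbf{w}_{pi}$ relative to $\nabla u_i$ or an altogether different trial-function construction---precisely the gap that keeps the full Payne--P\'olya--Weinberger conjecture open.
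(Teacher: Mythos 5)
Your plan reproduces the architecture of the paper (the decomposition $x^p\nabla u_i=\nabla h_{pi}+\mathbf{w}_{pi}$, the Cheng--Yang trial functions, the Yang-type summation, and the correct bookkeeping that an auxiliary bound $\Lambda_i\sum_{p=1}^n\|\nabla q_{pi}\|^2\ge\alpha$ yields the coefficient $\tfrac{4(n+3-\alpha)}{n^2}$), but the one ingredient that actually constitutes the theorem --- the proof that $\alpha=\tfrac53$ is admissible --- is missing. You only assert that the buckling equation and ``a careful use of $\Delta\mathbf{w}_{pi}$'' should produce it, and you yourself call this the main obstacle; without it your argument only re-derives the old constant $\tfrac{4(n+2)}{n^2}$ of \cite{cy2}. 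Note also that your expansion of $\sum_p\|\nabla q_{pi}\|^2$ via $\nabla q_{pi}=\mathbf{w}_{pi}+u_ie_p$ cannot by itself close the gap: the cross term is exactly $-2\|\mathbf{w}_{pi}\|^2$ (since $\int\langle\mathbf{w}_{pi},\nabla(x^pu_i)\rangle=0$ and $\int\langle\mathbf{w}_{pi},x^p\nabla u_i\rangle=\|\mathbf{w}_{pi}\|^2$), so one only gets the identity $\|\nabla q_{pi}\|^2=\|u_i\|^2-\|\mathbf{w}_{pi}\|^2$, and a lower bound then seems to require controlling $\Lambda_i\|u_i\|^2$ from below, which is not available directly (the normalization is $\|\nabla u_i\|=1$, not $\|u_i\|=1$).

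What the paper actually does at this point is prove two further facts and combine them so that the uncontrolled quantity $\Lambda_i\|u_i\|^2$ cancels: (i) the identity of Lemmas 2.1--2.2, namely $3\|\langle\nabla x^p,\nabla u_i\rangle\|^2-2\Lambda_i\|\nabla q_{pi}\|^2=\tfrac12-\tfrac12\Lambda_i\|u_i\|^2$, obtained by applying $\Delta^2(x^pu_i)=4\langle\nabla x^p,\nabla(\Delta u_i)\rangle-\Lambda_i x^p\Delta u_i$ and the orthogonality of $\mathbf{w}_{pi}$ to gradients; and (ii) Lemma 2.3, $\Lambda_i\sum_{p=1}^n\|\mathbf{w}_{pi}\|^2\ge n-1$, which uses $\sum_p\|\nabla\mathbf{w}_{pi}\|^2=n-1$, the formula $\Delta\mathbf{w}_{pi}=\nabla\langle\nabla x^p,\nabla u_i\rangle-\Delta u_i\nabla x^p$, the computation $\sum_p\|\nabla\langle\nabla x^p,\nabla u_i\rangle\|^2=\Lambda_i$, and an optimized Cauchy--Schwarz. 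Summing (i) over $p$, substituting $n\|u_i\|^2=\sum_p\|\nabla q_{pi}\|^2+\sum_p\|\mathbf{w}_{pi}\|^2$, and inserting (ii) gives $\tfrac32\Lambda_i\sum_p\|\nabla q_{pi}\|^2\ge\tfrac52$, i.e.\ the claimed $\tfrac53$; none of these steps appears in your proposal, and inventing them is the substance of the proof. A further, smaller, point: the trial functions are $\varphi_{pi}=h_{pi}-\sum_jb_{pij}u_j$, i.e.\ projections of $h_{pi}$, not of $q_{pi}$; the function $q_{pi}$ enters only through the auxiliary estimate. If you literally projected $q_{pi}$, whose gradient is $u_i\nabla x^p+\mathbf{w}_{pi}$, you would lose the $x^p\nabla u_i$ structure on which the gap inequality (2.21) and the whole commutator-type computation rest.
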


\begin{Rem}
Since our universal inequality is a quadratic inequality of the eigenvalue $\Lambda_{k+1}$, we can conclude
an upper bound of the gap between  two consecutive eigenvalues as in \cite{cy2} from {\rm (1.9)}. We will not give  it  
in details.
\end{Rem}

When $M$ is an $n$-dimensional unit sphere $S^n(1)$, Wang and Xia \cite{wx}
have studied the buckling problem on a domain $\Omega$  in $S^n(1)$.
They have obtained a  universal inequality for eigenvalues of the 
buckling problem, namely, they have proved
 that eigenvalues 
of the buckling problem {\rm (1.2)} on a  domain 
$\Omega$ in the unit sphere $S^n(1)$ satisfy
\begin{equation}
\begin{aligned}
&2\sum_{i=1}^k(\Lambda_{k+1}-\Lambda_i)^2\\
&\le\sum_{i=1}^k
(\Lambda_{k+1}-\Lambda_i)^2\bigl\{\delta\Lambda_i
+\dfrac{\delta^2\bigl(\Lambda_i-(n-2)\bigl )}{4(\delta\Lambda_i+n-2)}\bigl\}\\
&+\dfrac1{\delta}\sum_{i=1}^k
(\Lambda_{k+1}-\Lambda_i)(\Lambda_i+\dfrac{(n-2)^2}4),
\end{aligned}
\end{equation}
where $\delta$ is an arbitrary positive constant.

According to our knowledge, we think that eigenvalues of the buckling problem
on a domain  in $S^n(1)$ should satisfy
\begin{equation*}
\sum_{i=1}^k(\Lambda_{k+1}-\Lambda_i)^2\le
\frac {4}{n}\sum_{i=1}^k
(\Lambda_{k+1}-\Lambda_i)(\Lambda_i+\dfrac{n^2}{4}).
\end{equation*}
Since one needs to use covariant derivative for the unit sphere, in order to exchange
the orders of covariant derivatives, one must use the Bochner formula, which is different from
the case of the Euclidean spaces. Thus, one needs to deal 
with the terms of Ricci curvature. Hence, it will be a very hard work to obtain the 
above universal inequality. The second purpose in this paper is to  give an important improvement for   
the result of Wang and Xia.
\vskip 2mm
\noindent
\begin{theorem}  Eigenvalues $\Lambda_i$'s of the buckling problem {\rm (1.2)} on a  domain 
$\Omega$ in the unit sphere $S^n(1)$ satisfy
\begin{equation}
\begin{aligned}
&2\sum_{i=1}^k(\Lambda_{k+1}-\Lambda_i)^2
+(n-2)\sum_{i=1}^k\dfrac{(\Lambda_{k+1}-\Lambda_i)^2}{\Lambda_i-(n-2)}\\
&\le\sum_{i=1}^k
(\Lambda_{k+1}-\Lambda_i)^2\bigl\{\Lambda_i-\dfrac{n-2}{\Lambda_i-(n-2)}\bigl \}\delta_i\\
&+\sum_{i=1}^k\dfrac{(\Lambda_{k+1}-\Lambda_i)}{\delta_i}(\Lambda_i+\dfrac{(n-2)^2}4)
\end{aligned}
\end{equation}
for an arbitrary positive non-increasing monotone sequence $\{\delta_i\}_{i=1}^k$. 
\end{theorem}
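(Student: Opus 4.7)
The plan is to adapt the trial-function construction of Wang--Xia \cite{wx} to the buckling problem on a spherical domain, refining the intermediate estimates to pick up the improved curvature-dependent term on the left-hand side of (1.11). Use the isometric embedding $S^{n}(1)\hookrightarrow\mathbf{R}^{n+1}$ to obtain ambient coordinates $x^{1},\ldots,x^{n+1}$ satisfying $\sum_{\alpha}(x^{\alpha})^{2}=1$, $\Delta x^{\alpha}=-nx^{\alpha}$, $\sum_{\alpha}|\nabla x^{\alpha}|^{2}=n$, $\sum_{\alpha}x^{\alpha}\nabla x^{\alpha}=0$, and $\sum_{\alpha}\langle\nabla x^{\alpha},V\rangle\nabla x^{\alpha}=V$ for every tangent vector $V$. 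Normalize the buckling eigenfunctions so that $\int_{\Omega}\langle\nabla u_{i},\nabla u_{j}\rangle=\delta_{ij}$, which automatically yields $\int\Delta u_{i}\,\Delta u_{j}=\Lambda_{i}\delta_{ij}$, and for each pair $(\alpha,i)$ set
\[
\varphi_{\alpha i}=x^{\alpha}u_{i}-\sum_{j=1}^{k}a_{\alpha ij}\,u_{j},\qquad a_{\alpha ij}=\int_{\Omega}\langle\nabla(x^{\alpha}u_{i}),\nabla u_{j}\rangle,
\]
so that $\int\langle\nabla\varphi_{\alpha i},\nabla u_{j}\rangle=0$ for $j=1,\dots,k$ and $\varphi_{\alpha i}$ is admissible in the variational characterization $\Lambda_{k+1}\int|\nabla\varphi|^{2}\le\int(\Delta\varphi)^{2}$. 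Subtracting $\Lambda_{i}\int|\nabla\varphi_{\alpha i}|^{2}$ from both sides and multiplying by $(\Lambda_{k+1}-\Lambda_{i})$ produces the master inequality
\[
(\Lambda_{k+1}-\Lambda_{i})^{2}\int|\nabla\varphi_{\alpha i}|^{2}\le(\Lambda_{k+1}-\Lambda_{i})\Bigl(\int(\Delta\varphi_{\alpha i})^{2}-\Lambda_{i}\int|\nabla\varphi_{\alpha i}|^{2}\Bigr),
\]
to be summed over $\alpha$ and $i$.

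The second step is to evaluate both sides sphere-intrinsically, via $\Delta(x^{\alpha}u_{i})=x^{\alpha}\Delta u_{i}+2\langle\nabla x^{\alpha},\nabla u_{i}\rangle-nx^{\alpha}u_{i}$ and the ambient-coordinate identities above. The summation over $\alpha$ reduces every integral in the master inequality to linear combinations of $\int(\Delta u_{i})^{2}=\Lambda_{i}$, $\int|\nabla u_{i}|^{2}=1$, $\int u_{i}^{2}$, and $\int|\nabla^{2}u_{i}|^{2}$, plus bilinear cross-terms soaked up by the coefficients $a_{\alpha ij}$. The Reilly--Bochner identity on $S^{n}(1)$, applied to each $u_{i}$ (which vanishes with its normal derivative on $\partial\Omega$), supplies $\int|\nabla^{2}u_{i}|^{2}=\Lambda_{i}-(n-1)$; this is where the Ricci curvature of the sphere enters. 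The shifts $n-2$ and $(n-2)^{2}/4$ appearing in the statement should then emerge after an additional integration by parts on $\int\langle\nabla x^{\alpha},\nabla u_{i}\rangle\,x^{\alpha}\Delta u_{i}$ followed by completion of squares against the cross-contribution $-nx^{\alpha}u_{i}$ from $\Delta x^{\alpha}$, which effectively transfers one unit of Ricci curvature from $\Lambda_{i}-(n-1)$ into the denominator $\Lambda_{i}-(n-2)$.

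The final step is a per-index weighted AM--GM of the form $A_{i}\delta_{i}+B_{i}/\delta_{i}\ge 2\sqrt{A_{i}B_{i}}$ applied in reverse, which produces the two right-hand-side terms in (1.11) from a single symmetric cross-product estimate and explains the appearance of the sequence $\{\delta_{i}\}$ in place of the single constant $\delta$ used in \cite{wx}. The non-increasing monotonicity of $\{\delta_{i}\}$ appears to be required in order to discard Chebyshev-type off-diagonal remainders of the shape $\sum_{i<j}(\Lambda_{k+1}-\Lambda_{i})(\Lambda_{k+1}-\Lambda_{j})(\delta_{i}-\delta_{j})(\cdots)$ arising from the coupling through $a_{\alpha ij}$ with $i\neq j$, since $(\Lambda_{k+1}-\Lambda_{i})-(\Lambda_{k+1}-\Lambda_{j})$ and $\delta_{i}-\delta_{j}$ carry the same sign when $i<j$. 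The principal obstacle, in my view, lies in the bookkeeping of step two: ensuring that the additional term $(n-2)\sum_{i}(\Lambda_{k+1}-\Lambda_{i})^{2}/(\Lambda_{i}-(n-2))$ genuinely survives on the left-hand side rather than being swallowed by a coarse Schwarz estimate as in \cite{wx}. Concretely, one must keep the cross product $\langle\nabla x^{\alpha},\nabla u_{i}\rangle\,\Delta u_{i}$ in \emph{exact} form and carefully account for the Ricci term $(n-1)|\nabla u_{i}|^{2}$ that appears whenever covariant derivatives are exchanged via the Bochner formula, so that the one-unit-of-curvature improvement over (1.10) is visible in the final conclusion.
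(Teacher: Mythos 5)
Your outline diverges from the paper at the very first step, and the divergence is where the content of the theorem lives. You take as trial functions $\varphi_{\alpha i}=x^{\alpha}u_i-\sum_j a_{\alpha ij}u_j$, i.e.\ you work directly with $x^{\alpha}u_i$. The paper (following Wang--Xia, and ultimately Cheng--Yang \cite{cy2}) instead decomposes $x^p\nabla u_i=\nabla h_{pi}+{\bf w}_{pi}$, with $\nabla h_{pi}$ the projection onto ${\bf H}^2_{1,D}(\Omega)$, and uses $\varphi_{pi}=h_{pi}-\sum_j b_{pij}u_j$. This is not a cosmetic choice: every term in (1.11) that improves on Wang--Xia's (1.10) --- the extra summand $(n-2)\sum_i(\Lambda_{k+1}-\Lambda_i)^2/(\Lambda_i-(n-2))$ on the left and the reduced coefficient $\Lambda_i-\tfrac{n-2}{\Lambda_i-(n-2)}$ on the right --- comes from Lemma 3.1, the bound $\sum_{p=1}^{n+1}\|{\bf w}_{pi}\|^2\le\frac{\Lambda_i-(n-1)}{\Lambda_i-(n-2)}$. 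That lemma is proved by combining the identity $1=\sum_p\|\nabla h_{pi}\|^2+\sum_p\|{\bf w}_{pi}\|^2$, the Bochner-formula computation $\sum_p\|\nabla\langle\nabla x^p,\nabla u_i\rangle\|^2=\Lambda_i-(n-2)$, and an optimization over an auxiliary parameter $\epsilon_i$ (the ratio $\frac{\Lambda_i-(n-1)}{\Lambda_i-(n-2)}$ arises from choosing $\epsilon_i=\frac{\Lambda_i-(n-2)}{2}$, not from any integration by parts). In your setup the object ${\bf w}_{pi}$ does not exist, so there is nothing to which such an estimate could attach, and your statement that the $(n-2)$-shifts ``should then emerge after an additional integration by parts followed by completion of squares'' is exactly the part of the theorem that has to be proved; it is asserted, not derived. (Your Reilly--Bochner identity $\int|\nabla^2u_i|^2=\Lambda_i-(n-1)$ is correct and is equivalent to the paper's (3.11), but by itself it does not produce the rational-in-$\Lambda_i$ terms.)

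There is a second structural gap. The ``master inequality'' you write down (Rayleigh--Ritz multiplied by $\Lambda_{k+1}-\Lambda_i$) cannot yield a universal bound on its own, because $\|\nabla\varphi_{\alpha i}\|^2$ has no a priori lower bound. The actual argument needs a separate exact identity of commutator type to which the weighted Cauchy--Schwarz with $\delta_i$ is applied: in the paper this is $\gamma_{pi}+2\sum_j b_{pij}c_{pij}=-2\int\langle\nabla\varphi_{pi},Z_{pi}-\sum_j c_{pij}\nabla u_j\rangle+(n-2)\|{\bf w}_{pi}\|^2$ with $Z_{pi}=\nabla\langle\nabla x^p,\nabla u_i\rangle-\tfrac{n-2}{2}x^p\nabla u_i$, together with the computations $\sum_p\gamma_{pi}=n$, $\sum_p\|Z_{pi}\|^2=\Lambda_i+\tfrac{(n-2)^2}{4}$, $\sum_p P_{pi}=0$, the antisymmetry $c_{pij}=-c_{pji}$ and symmetry $b_{pij}=b_{pji}$, and only then the sign argument using the non-increasing $\delta_i$ (which you do describe in roughly the right spirit). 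Your plan names none of these quantities, and since your trial functions differ from the paper's, all of the corresponding identities would have to be rebuilt from scratch with no guarantee that they retain the favorable antisymmetric structure that lets the cross terms cancel after summing over $i$. As it stands the proposal is a program, not a proof, and the decisive estimate behind the improvement over (1.10) is missing.
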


\begin{Rem} It is obvious that our result is sharper than one of 
Wang and Xia \cite{wx} even if we take $\delta_i=\delta$ for any $i$. 
Since our universal inequality is a quadratic inequality of $\Lambda_{k+1}$,
we can obtain an explicit upper bound for the eigenvalue $\Lambda_{k+1}$
from {\rm (1.11)}.
\end{Rem}
In particular, when $n=2$, we have
\begin{Cor} 
Eigenvalues $\Lambda_i$'s of the buckling problem {\rm (1.2)} on a  domain 
$\Omega$ in the unit sphere $S^2(1)$ satisfy
\begin{equation}
\begin{aligned}
&\sum_{i=1}^k(\Lambda_{k+1}-\Lambda_i)^2
\le\sum_{i=1}^k(\Lambda_{k+1}-\Lambda_i)\Lambda_i^2.
\end{aligned}
\end{equation}
\end{Cor}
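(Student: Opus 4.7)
The plan is to obtain the corollary as a direct specialization of Theorem 1.2 to the case $n=2$, followed by an optimal choice of the free sequence $\{\delta_i\}$. First I would substitute $n=2$ into the inequality (1.11). Since every factor $n-2$ vanishes, the second term on the left of (1.11), the subtraction $(n-2)/(\Lambda_i-(n-2))$ inside the first right-hand coefficient, and the additive constant $(n-2)^2/4$ all disappear, so (1.11) collapses to
\begin{equation*}
2\sum_{i=1}^k(\Lambda_{k+1}-\Lambda_i)^2 \le \sum_{i=1}^k (\Lambda_{k+1}-\Lambda_i)^2\,\Lambda_i\,\delta_i + \sum_{i=1}^k \frac{\Lambda_{k+1}-\Lambda_i}{\delta_i}\,\Lambda_i.
\end{equation*}

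Next I would look for a choice of $\delta_i$ that simultaneously kills the coefficient $\Lambda_i\delta_i$ in the first right-hand sum and produces the desired $\Lambda_i^2$ weight in the second. The natural candidate is $\delta_i = 1/\Lambda_i$. This sequence is positive because the buckling eigenvalues are strictly positive on a bounded domain, and it is non-increasing because $\{\Lambda_i\}$ is non-decreasing; hence the monotonicity hypothesis of Theorem 1.2 is satisfied. With this choice the first right-hand sum becomes $\sum_{i=1}^k(\Lambda_{k+1}-\Lambda_i)^2$ and the second becomes $\sum_{i=1}^k(\Lambda_{k+1}-\Lambda_i)\Lambda_i^2$.

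Finally, subtracting $\sum_{i=1}^k(\Lambda_{k+1}-\Lambda_i)^2$ from both sides yields (1.12). There is no real obstacle, since all of the analytical content is already carried by Theorem 1.2; the only substantive step is identifying the correct $\delta_i$. In dimension two the simplification $n-2=0$ essentially forces $\delta_i = 1/\Lambda_i$, because this is the unique choice making the coefficient $\Lambda_i\delta_i$ equal to $1$, which is in turn the only value allowing the first right-hand sum to be absorbed into the left without loss. Any smaller $\delta_i$ would leave the first sum smaller but inflate the second by a compensating amount, while any larger $\delta_i$ would leave a positive residue of $(\Lambda_{k+1}-\Lambda_i)^2$ on the right that cannot be cancelled; thus the choice is both natural and essentially optimal.
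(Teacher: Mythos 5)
Your proposal is correct and follows exactly the paper's own proof: specialize Theorem 1.2 to $n=2$ so that all the $(n-2)$ terms vanish, and take $\delta_i=1/\Lambda_i$, which is a positive non-increasing sequence since the eigenvalues are positive and non-decreasing. The algebraic simplification and absorption of the remaining $\sum_{i=1}^k(\Lambda_{k+1}-\Lambda_i)^2$ term into the left-hand side are precisely what the paper does.
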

\begin{proof} Since $n=2$, from the theorem 1.2 and taking $\delta_i=\dfrac1{\Lambda_i}$,  
for $i=1, 2, \cdots, k$, 
for which  $\{\delta_i\}_{i=1}^k$  is a positive non-increasing monotone sequence, 
we finish
the proof of the corollary 1.1.
\end{proof}

\begin{Rem}
About the recent developments in universal inequalities for eigenvalues of 
the Dirichlet eigenvalue problem of the Laplacian and the clamped plate problem,
readers can see \cite{ah}, \cite{cc},  \cite{cim}, \cite{cy1}, \cite{cy3}, \cite{ehi}, \cite{h} and \cite{wx2}. 
\end{Rem}

\vskip 3mm
\noindent
{\it Acknowledgements.} We would like to express our gratitude 
to the referee for  valuable comments and suggestions.

\vspace{8mm}
\section{Proof of the theorem 1.1}

For the convenience of readers, we review the method for constructing 
trial functions introduced by Cheng and Yang \cite{cy2}.  In this section,
$\Omega$ is assumed to be a bounded domain in $\mathbf{R}^n$.
For functions $f$ and $h$, we define {\it Dirichlet inner product}
$(f,h)_D$ of $f$ and $h$ by
$$
(f,h)_D=\int_{\Omega}\langle\nabla f, \nabla h\rangle.
$$
{\it  Dirichlet norm} of a function $f$ is defined by
$$
||f||_D=\{(f,f)_D\}^{1/2}=\left (\int_{\Omega}
\sum_{\alpha=1}^n|\nabla_{\alpha} f |^2\right )^{1/2}.
$$
Let $u_i$ be  the $i$-th orthonormal eigenfunction of the buckling  problem
(1.2) corresponding to the eigenvalue $\Lambda_i$,
namely,  $ u_i$ satisfies
\begin{equation}
\begin{cases}
&\Delta^2u_i  =-\Lambda_i \Delta u_i \quad in \ \ \Omega ,\\
& u_i |_{\partial \Omega}=\left .\frac {\partial u_i}{\partial \nu}
\right|_{\partial \Omega} =0\\
&(u_i,u_j)_D=\int_{\Omega} \langle\nabla u_i, \nabla u_j\rangle =\delta_{ij}.
\end{cases}
\end{equation}
 $H^2_2(\Omega)$  defined by 
$$
H^2_2 (\Omega)=\{f:  f ,\nabla_{\alpha} f,
\nabla_{\alpha}\nabla_{\beta}f
\in L^2 (\Omega) ,\quad \alpha, \beta=1,\dots,n\}
$$
is  a Hilbert space with norm $\| \cdot \|_2$:
$$
\|f\|_2=\left (\int_{\Omega} |f|^2+ \int_{\Omega}
|\nabla f |^2 +\sum_{\beta,\alpha=1}^n(\nabla_{\alpha}
\nabla_{\beta} f)^2\right )^{1/2}.
$$
Let $H^2_{2,D}(\Omega)$ be a subspace of
$H_2^2(\Omega)$defined as
$$
H^2_{2, D} (\Omega)=\left \{ f \in H^2_2(\Omega): \ 
f|_{\partial M}= \left . \frac {\partial}{\partial \nu} f\right |
_{\partial \Omega} =0
\right \}.
$$
The biharmonic
operator $\Delta ^2$ defines a self-adjoint operator
acting on $ H^2_{2, D}(\Omega) $ with discrete eigenvalues 
$\{ 0< \Lambda_1 \leq \Lambda_2 \le \dots\leq \Lambda_k\leq \cdots \}$ for the buckling problem (1.2)
and the eigenfunctions defined in (2.1)
$$
\{u_i\}_{i=1}^{\infty}= \{ u_1,  u_2, \cdots,  u_k, \cdots \}
$$
form a complete orthogonal basis for Hilbert space 
$ H^2_{2,D}(\Omega)$. 
We define 
an inner product  $( {\bf f}, {\bf h})$ for vector-valued functions  $ {\bf f} =(
f^1, f^2, \cdots,  f^n ) \in \mathbf  R^n$ and ${\bf h}=(h^1, h^2, \cdots,  h^n)
\in \mathbf R^n $ by
$$
( {\bf f},{\bf h}  )\equiv\int_{\Omega} \langle{\bf f},
{\bf h}\rangle
=\int_{\Omega} \sum_{\alpha=1}^n f^{\alpha}h^{\alpha}.
$$
The norm of ${\bf f}$ is defined by
$$
\|{\bf f } \|  =\left (
{\bf f }  ,{\bf f } \right )^{1/2}=
\left \{\int_{\Omega} \sum_{\alpha=1}^n (f^{\alpha} )^2
\right \}^{1/2}.
$$
Denote a Hilbert space ${\bf H}^2_1(\Omega)$  of 
the vector-valued functions as
$$
{\bf H}^2_1(\Omega)=\{
{\bf f}:f^{\alpha}, \nabla_{\beta}f^{\alpha } \in L^2(\Omega)
, \ \text{for} \ \ \alpha,\beta=1,\dots,n\}
$$
with norm $ \|\cdot \|_1$:
$$
\|{\bf f}\|_1=\left (\|{\bf f}\|^2+\int_{\Omega}\sum_{\alpha,\beta=1}^n
|\nabla_{\alpha} f^{\beta}|^2 \right )^{1/2}.
$$
Let ${\bf H}^2_{1,D}(\Omega) \subset {\bf H}^2_1(\Omega)$ be 
a subspace of ${\bf H}^2_1(\Omega)$  spanned by  
the vector-valued functions $\{ \nabla u_i\}_{i=1}^{\infty}$, which  
form  a complete orthonormal basis of ${\bf H}^2_{1,D}(\Omega) $.

It is easy to see that
for any $f \in H^2_{2,D}(\Omega)$,  
$\nabla f \in {\bf H}^2_{1,D}(\Omega)$
and for any ${\bf h}\in {\bf H}^2_{1,D}(\Omega)$, there exists  a function
$f \in H^2_{2,D}(\Omega )$ such  that
${\bf h }  = \nabla f$. 

Let $x^p$ for $p=1, 2, \cdots, n$ be the $p$-th coordinate function
of $\mathbf {R}^n$.
For  the vector-valued function  $x^p \nabla u_i, i=1, \dots, k$, we decompose
it into
\begin{equation}
x^p \nabla u_i= \nabla h_{pi} + {\bf w}_{pi},
\end{equation}
where  $h_{pi} \in H^2_{2,D}(\Omega)$ and $\nabla h_{pi}$ is the projection of $x^p\nabla u_i $
onto ${\bf H}^2_{1,D}(\Omega)$ and
${\bf w}_{pi} \perp H^2_{1,D} (\Omega)$. Thus, 

\begin{equation}
({\bf w}_{pi} , \nabla u )
=\int_{\Omega} \sum_{j=1}^n w_{pi}^{j}
\nabla_{j} u =0, \ 
\text{for any} \ u \in H^2_{2,D}(\Omega).
\end{equation}
Therefore, since $H^2_{2,D}(\Omega)$ is dense in $L^2(\Omega)$
and $C^1(\Omega)$ is dense in $L^2(\Omega)$,
we have, for any function $h\in C^1(\Omega)\cap L^2(\Omega)$,
\begin{equation}
( {\bf w}_{pi} , \nabla h )=0.
\end{equation}
Hence, from the definition of ${\bf w}_{pi}$ and (2.4), we have
\begin{equation}
\begin{cases}
&{\bf w}_{pi}|_{\partial \Omega}=0,\\
&\|\text { div}{\bf w}_{pi}\|^2=0,\quad (\text { div}{\bf w}_{pi} \equiv\sum_{j=1}^n
\nabla_{j} w_{pi}^{j} ).
\end{cases}
\end{equation}
We define function $\varphi_{pi}$ by
\begin{equation}
\varphi_{pi}=h_{pi}-\sum_{j=1}^k b_{pij}u_j,
\end{equation}
where 
$$
b_{pij}=\int x^p\langle\nabla u_i, \nabla u_j\rangle=b_{pji}.
$$ 
It is easy to check, from the definition (2.2) of $h_{pi}$,  that 
$\varphi_{pi}$ satisfies
\begin{equation}
\varphi_{pi}|_{\partial \Omega}
=\frac{\partial\varphi_{pi}}{\partial\nu}|_{\partial\Omega}=0\ \
\text{and} \ \ (\varphi_{pi},u_j)_D=(\nabla \varphi_{pi},\nabla u_j)=0,
\end{equation}
for any $j=1, 2, \cdots, k$.
Hence,  we know that $\varphi_{pi}$ is a trial  function.\par

In order to prove our theorem 1.1, we prepare three lemmas.
\begin{lemma} For any  $p$ and $i$, we have 
\begin{equation}
1+2\|\langle\nabla x^p,\nabla u_i\rangle\|^2
=2\int  x^pu_i\langle\nabla x^p, \nabla(\Delta u_i)\rangle.
\end{equation}
\end{lemma}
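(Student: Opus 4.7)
The plan is to reduce the identity to a direct integration-by-parts computation. First I would exploit that $x^p$ is a coordinate function on $\mathbf R^n$, so $\nabla x^p=e_p$ is constant and
\begin{equation*}
\langle\nabla x^p,\nabla u_i\rangle=\nabla_p u_i,\qquad \langle\nabla x^p,\nabla(\Delta u_i)\rangle=\nabla_p(\Delta u_i).
\end{equation*}
The claim therefore reduces to showing
\begin{equation*}
1+2\int_\Omega(\nabla_p u_i)^2=2\int_\Omega x^p u_i\,\nabla_p(\Delta u_i).
\end{equation*}
The two ingredients I would feed into the computation are the boundary conditions from (2.1), namely $u_i|_{\partial\Omega}=0$ and $\partial u_i/\partial\nu|_{\partial\Omega}=0$ (which together force $\nabla u_i=0$ on $\partial\Omega$), together with the normalization $\|\nabla u_i\|^2=1$.

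Next I would integrate by parts to move $\nabla_p$ off $\Delta u_i$; since $u_i$ vanishes on $\partial\Omega$ there is no boundary contribution, and the Leibniz rule yields
\begin{equation*}
\int_\Omega x^p u_i\,\nabla_p(\Delta u_i)=-\int_\Omega u_i\Delta u_i-\int_\Omega x^p(\nabla_p u_i)\Delta u_i.
\end{equation*}
The first summand simplifies by a single integration by parts and the normalization to $\int_\Omega|\nabla u_i|^2=1$. For the second summand, I would write $\Delta u_i=\sum_\alpha\nabla_\alpha\nabla_\alpha u_i$ and integrate by parts in each $\nabla_\alpha$; the boundary term vanishes because $\nabla u_i=0$ on $\partial\Omega$. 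Expanding $\nabla_\alpha(x^p\nabla_p u_i)$ by Leibniz produces $\int_\Omega(\nabla_p u_i)^2$ together with $\tfrac12\int_\Omega x^p\nabla_p(|\nabla u_i|^2)$, and a final integration by parts in $x^p$ converts the latter into $-\tfrac12\int_\Omega|\nabla u_i|^2=-\tfrac12$. Assembling the pieces (and doubling) gives $2+2\|\nabla_p u_i\|^2-1=1+2\|\nabla_p u_i\|^2$, which is exactly the left-hand side.

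There is no real obstacle here: the argument is essentially three integrations by parts. The only point requiring care is the systematic use of \emph{both} boundary conditions $u_i=0$ and $\nabla u_i=0$ on $\partial\Omega$ to kill the boundary terms that arise at each step, together with one appeal to the orthonormality $\|\nabla u_i\|^2=1$ built into the normalization of the $u_i$.
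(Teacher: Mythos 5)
Your proof is correct: the identity does follow from the indicated integrations by parts, using $u_i=\partial u_i/\partial\nu=0$ on $\partial\Omega$ (hence $\nabla u_i=0$ there) to kill boundary terms and $\|\nabla u_i\|^2=1$ for the normalization, and this is essentially the paper's argument. The only organizational difference is that the paper lets the integral $\int x^pu_i\langle\nabla x^p,\nabla(\Delta u_i)\rangle$ reappear on the right-hand side and solves the resulting linear relation, whereas you evaluate $\int x^p(\nabla_p u_i)\Delta u_i$ outright via $\tfrac12\int x^p\nabla_p|\nabla u_i|^2=-\tfrac12$; the substance is the same.
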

\begin{proof} From the Stokes' formula, we have
\begin{equation*}
\begin{aligned}
&\int \langle x^pu_i\nabla x^p, \nabla(\Delta u_i)\rangle\\
&=-\int\text{div}( x^pu_i\nabla x^p) \Delta u_i\\
&=-\int u_i\Delta u_i-\int x^p\Delta u_i\langle\nabla x^p,\nabla u_i\rangle,
\end{aligned}
\end{equation*}
\begin{equation*}
\begin{aligned}
&\int x^p\Delta u_i\langle\nabla x^p,\nabla u_i\rangle\\
&=-\int\langle \nabla x^p, \nabla u_i\rangle^2
-\int x^p\langle \nabla u_i, \nabla\langle \nabla x^p,\nabla u_i\rangle\rangle\\
&=-\|\langle \nabla x^p,\nabla u_i\rangle\|^2
+\int\text{div}(x^p\nabla\langle \nabla x^p,\nabla u_i\rangle)u_i\\
&=-\|\langle \nabla x^p,\nabla u_i\rangle\|^2
+\int\langle\nabla x^p,\nabla\langle \nabla x^p,\nabla u_i\rangle\rangle u_i
+\int x^pu_i\Delta\langle \nabla x^p,\nabla u_i\rangle\\
&=-\|\langle \nabla x^p,\nabla u_i\rangle\|^2
-\int\langle\nabla x^p,\nabla u_i\rangle^2
+\int x^pu_i\langle \nabla x^p,\nabla (\Delta u_i)\rangle.\\
\end{aligned}
\end{equation*}
Since $\|\nabla u_i\|^2=1$, we have 
\begin{equation*}
1+2\|\langle\nabla x^p,\nabla u_i\rangle\|^2
=2\int  x^pu_i\langle\nabla x^p, \nabla(\Delta u_i)\rangle.
\end{equation*}
\end{proof}
According to 
$x^p\nabla u_i=\nabla {h}_{pi}+{\bf w}_{pi}$ 
and $\nabla(x^pu_i)\in H_{1,D}^2(\Omega)$, we have 
\begin{equation}
u_i\nabla x^p=\nabla(x^pu_i)-\nabla {h}_{pi}-{\bf w}_{pi}=\nabla q_{pi}-{\bf w}_{pi}
\end{equation}
with $\nabla q_{pi}=\nabla(x^pu_i)-\nabla {h}_{pi}$ and $q_{pi}\in  H_{2,D}^2(\Omega)$.
Hence, we derive
\begin{equation}
\| u_i\|^2=\|\nabla q_{pi}\|^2+\|{\bf w}_{pi}\|^2.
\end{equation}

\begin{lemma} For any $p$ and $i$, 
\begin{equation}
3\|\langle\nabla x^p,\nabla u_i\rangle\|^2-2\Lambda_i\|\nabla q_{pi}\|^2
=\dfrac12-\dfrac12\Lambda_i\|u_i\|^2.
\end{equation}
\end{lemma}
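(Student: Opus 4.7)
My approach is to use $x^p q_{pi}$ as a trial function and evaluate $\int x^p q_{pi}\,\Delta^2 u_i$ in two different ways. Since $q_{pi}$ and $\partial_\nu q_{pi}$ vanish on $\partial\Omega$, so do $x^p q_{pi}$ and its normal derivative; hence $x^p q_{pi}\in H^2_{2,D}(\Omega)$. The eigenvalue equation then gives $\int x^p q_{pi}\,\Delta^2 u_i=-\Lambda_i\int x^p q_{pi}\,\Delta u_i$, while two integrations by parts give $\int x^p q_{pi}\,\Delta^2 u_i=\int\Delta(x^p q_{pi})\,\Delta u_i$. These two expressions are each computable in closed form from identities already available, and equating them should produce the claim.

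For the first expression, one integration by parts yields $\int x^p q_{pi}\,\Delta u_i=-\int q_{pi}\nabla_p u_i-\int x^p\langle\nabla q_{pi},\nabla u_i\rangle$. The identity $\Delta q_{pi}=\nabla_p u_i$---which follows from $q_{pi}=x^p u_i-h_{pi}$ together with $\text{div}\,{\bf w}_{pi}=0$---combined with $q_{pi}\in H^2_{2,D}$ gives $-\int q_{pi}\nabla_p u_i=\|\nabla q_{pi}\|^2$. For the other term, substituting the decomposition (2.9) and using $({\bf w}_{pi},\nabla(x^p u_i))=0$ (valid since $x^p u_i\in H^2_{2,D}$) together with (2.10) yields $\int x^p\langle\nabla q_{pi},\nabla u_i\rangle=-\tfrac12\|u_i\|^2+\|{\bf w}_{pi}\|^2$. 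Assembling and applying (2.10) once more, $\int x^p q_{pi}\,\Delta u_i=2\|\nabla q_{pi}\|^2-\tfrac12\|u_i\|^2$, so this side equals $\tfrac12\Lambda_i\|u_i\|^2-2\Lambda_i\|\nabla q_{pi}\|^2$.

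For the second expression, expanding $\Delta(x^p q_{pi})=x^p\nabla_p u_i+2u_i+2w_{pi}^p$ (via $\nabla_p q_{pi}=u_i+w_{pi}^p$ from (2.9)) produces three integrals against $\Delta u_i$. The first, $\int x^p\nabla_p u_i\,\Delta u_i=\tfrac12-\|\nabla_p u_i\|^2$, follows from the same two IBPs that underlie Lemma 2.1; the second is $2\int u_i\,\Delta u_i=-2$ by normalization; and the third, $2\int w_{pi}^p\,\Delta u_i$, I would evaluate by writing ${\bf w}_{pi}=x^p\nabla u_i-\nabla h_{pi}$ and computing $\int h_{pi}\nabla_p\Delta u_i=\tfrac12$. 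This last value comes from Lemma 2.1 applied to the splitting $x^p u_i=q_{pi}+h_{pi}$, once one notes that $\int q_{pi}\nabla_p\Delta u_i=\int q_{pi}\Delta^2 q_{pi}=\|\nabla_p u_i\|^2$. Collecting the three pieces gives $\int x^p q_{pi}\,\Delta^2 u_i=\tfrac12-3\|\nabla_p u_i\|^2$, and equating this with the first expression and rearranging produces the lemma. The sole creative step is the choice of trial function $x^p q_{pi}$; the subtlest intermediate identity is $\int w_{pi}^p\,\Delta u_i=1-\|\nabla_p u_i\|^2$, which relies on Lemma 2.1 and the key relation $\Delta q_{pi}=\nabla_p u_i$.
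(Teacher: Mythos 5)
Your computation is correct: each intermediate identity you assert checks out. Indeed $\Delta q_{pi}=\langle\nabla x^p,\nabla u_i\rangle$ follows from $q_{pi}=x^pu_i-h_{pi}$ together with $\mathrm{div}\,\mathbf{w}_{pi}=0$, hence also $\Delta^2q_{pi}=\nabla_p(\Delta u_i)$ and $\int q_{pi}\Delta^2q_{pi}=\|\nabla_pu_i\|^2$; the boundary terms in all your integrations by parts vanish because $q_{pi}$, $h_{pi}$ and $x^pu_i$ lie in $H^2_{2,D}(\Omega)$ (so they and their full gradients vanish on $\partial\Omega$); the evaluations $\int x^p\nabla_pu_i\,\Delta u_i=\frac12-\|\nabla_pu_i\|^2$, $\int x^p\langle\nabla q_{pi},\nabla u_i\rangle=-\frac12\|u_i\|^2+\|\mathbf{w}_{pi}\|^2$ and $\int w_{pi}^p\,\Delta u_i=1-\|\nabla_pu_i\|^2$ are all correct; and equating your two evaluations of $\int x^pq_{pi}\,\Delta^2u_i$ rearranges exactly to (2.11). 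The route is organized differently from the paper's, though it draws on the same toolkit. The paper never pairs $x^pq_{pi}$ against $\Delta^2u_i$; it instead transforms the right-hand side of Lemma 2.1, $\int x^pu_i\langle\nabla x^p,\nabla(\Delta u_i)\rangle$, by shifting the operators onto $x^pu_i$, using the orthogonality (2.4) to replace $u_i\nabla x^p$ by $\nabla q_{pi}$, and expanding $\Delta^2(x^pu_i)=4\langle\nabla x^p,\nabla(\Delta u_i)\rangle-\Lambda_ix^p\Delta u_i$ via the eigen-equation; this yields (2.12)--(2.14). Both proofs ultimately rest on Lemma 2.1, the decomposition (2.9), the orthogonality of $\mathbf{w}_{pi}$ to gradients, and the same key quantity $\int\langle\nabla q_{pi},x^p\nabla u_i\rangle=\frac12\|u_i\|^2-\|\nabla q_{pi}\|^2$ (your $-\frac12\|u_i\|^2+\|\mathbf{w}_{pi}\|^2$ is the same number by (2.10)), so no essentially new mechanism appears; what your version buys is transparency: the identities $\Delta q_{pi}=\nabla_pu_i$ and $\int q_{pi}\Delta^2q_{pi}=\|\nabla_pu_i\|^2$ are made explicit rather than left implicit in a chain of integrations by parts, and the eigenvalue equation enters only in the pointwise form $\Delta^2u_i=-\Lambda_i\Delta u_i$. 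One small caution on wording: $x^pq_{pi}$ is not a trial function in the Rayleigh--Ritz sense here (no variational inequality is used, and none is needed); you are simply testing the eigen-equation against it, which is all your argument requires.
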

\begin{proof} Since, from the Stokes' formula, 
\begin{equation*}
\begin{aligned}
&\int  x^pu_i\langle\nabla x^p, \nabla(\Delta u_i)\rangle\\
&=\int  \Delta(x^pu_i)\langle\nabla x^p, \nabla u_i\rangle\\
&=-\int  \langle u_i\nabla x^p, \nabla\bigl(\Delta (x^pu_i)\bigl)\rangle\\
&=-\int  \langle \nabla q_{pi}, \nabla\bigl(\Delta (x^pu_i)\bigl)\rangle \qquad   (\text{from (2.4) and (2.9)}) \\
&=\int  q_{pi} \Delta^2 (x^pu_i)\\
&=\int  q_{pi} \bigl(4\langle \nabla x^p, \nabla (\Delta u_i)\rangle-\Lambda_ix^p \Delta u_i\bigl)\\
&=-4\int  \Delta u_i\langle \nabla q_{pi}, \nabla x^p\rangle-\Lambda_i\int q_{pi}x^p \Delta u_i\\
\end{aligned}
\end{equation*}
and 
\begin{equation*}
\begin{aligned}
&-\Lambda_i\int q_{pi}x^p \Delta u_i\\
&=\Lambda_i\int \langle \nabla q_{pi}, x^p \nabla u_i\rangle
+\Lambda_i\int q_{pi}\langle \nabla  x^p, \nabla u_i\rangle\\
&=\Lambda_i\int \langle \nabla q_{pi}, x^p \nabla u_i\rangle
-\Lambda_i\int \langle \nabla q_{pi}, u_i\nabla  x^p\rangle\\
&=\Lambda_i\int \langle \nabla q_{pi}, x^p \nabla u_i\rangle
-\Lambda_i\|\nabla q_{pi}\|^2,\\
\end{aligned}
\end{equation*}

\begin{equation*}
\begin{aligned}
&-4\int  \Delta u_i\langle \nabla q_{pi}, \nabla x^p\rangle\\
&=-4\int  \langle \nabla (\Delta q_{pi}), u_i\nabla x^p\rangle\\
&=4\int  \Delta q_{pi}  \langle \nabla x^p, \nabla u_i\rangle\\
&=-4\int    \langle\nabla q_{pi}, \nabla\langle \nabla x^p, \nabla u_i\rangle\rangle\\
&=-4\int    \langle u_i\nabla x^p, \nabla\langle \nabla x^p, \nabla u_i\rangle\rangle\\
&=4\|\langle \nabla x^p, \nabla u_i\rangle\|^2,
\end{aligned}
\end{equation*}
we obtain
\begin{equation}
\begin{aligned}
&\int  x^pu_i\langle\nabla x^p, \nabla(\Delta u_i)\rangle=4\|\langle \nabla x^p, \nabla u_i\rangle\|^2
+\Lambda_i\int \langle \nabla q_{pi}, x^p \nabla u_i\rangle
-\Lambda_i\|\nabla q_{pi}\|^2.\\
\end{aligned}
\end{equation}
From the lemma 2.1 and the above equality, we have 
\begin{equation}
\begin{aligned}
&6\|\langle \nabla x^p, \nabla u_i\rangle\|^2
-2\Lambda_i\|\nabla q_{pi}\|^2-1=-2\Lambda_i\int \langle \nabla q_{pi}, x^p \nabla u_i\rangle.\\
\end{aligned}
\end{equation}
Furthermore, from (2.4), 
$x^p\nabla u_i=\nabla {h}_{pi}+{\bf w}_{pi}$ 
and $\nabla q_{pi}=\nabla(x^pu_i)-\nabla {h}_{pi}$, we have 
\begin{equation}
\begin{aligned}
&\int \langle \nabla q_{pi}, x^p \nabla u_i\rangle\\
&=\int \langle \nabla q_{pi}, \nabla h_{pi}\rangle\\
&=\int \langle \nabla q_{pi}, \nabla (x^pu_i)-\nabla q_{pi}\rangle\\
&=\int \langle \nabla q_{pi}, \nabla (x^pu_i)\rangle-\|\nabla q_{pi}\|^2\\
&=\int \langle u_i\nabla x^{p}, \nabla (x^pu_i)\rangle-\|\nabla q_{pi}\|^2\\
&=\|u_i\|^2+\int \langle u_i\nabla x^{p}, x^p\nabla u_i\rangle-\|\nabla q_{pi}\|^2.\\
\end{aligned}
\end{equation}
Since 
$$
\int \langle u_i\nabla x^{p}, x^p\nabla u_i\rangle=-\|u_i\|^2-\int \langle u_i\nabla x^{p}, x^p\nabla u_i\rangle,
$$
we obtain
$$
\int \langle u_i\nabla x^{p}, x^p\nabla u_i\rangle=-\frac12\|u_i\|^2.
$$
According to (2.13) and (2.14), we have 
\begin{equation*}
3\|\langle\nabla x^p,\nabla u_i\rangle\|^2-2\Lambda_i\|\nabla q_{pi}\|^2
=\dfrac12-\dfrac12\Lambda_i\|u_i\|^2.
\end{equation*}
It finishes the proof of the lemma 2.2.
\end{proof}

\begin{lemma} For any $i$,
\begin{equation}
\Lambda_i \sum_{p=1}^n\| {\bf w}_{pi}\|^2\geq (n-1)
\end{equation}
holds.
\end{lemma}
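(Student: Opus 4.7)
My plan is to exploit the Laplacian of the auxiliary vector field $\mathbf{w}_{pi}$, the tool flagged in the introduction. Starting from the decomposition $x^p \nabla u_i = \nabla h_{pi} + \mathbf{w}_{pi}$ together with the identity $\Delta h_{pi} = \partial_p u_i + x^p \Delta u_i$ (which one reads off by taking the divergence of (2.2) and invoking $\mathrm{div}\,\mathbf{w}_{pi} = 0$ from (2.5)), a componentwise calculation gives
$$\Delta \mathbf{w}_{pi} = \nabla(\partial_p u_i) - (\Delta u_i)\, e_p,$$
where $e_p$ is the $p$-th coordinate vector. This explicit formula drives everything.

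Next I would establish two global $p$-summed identities. For $\|\Delta \mathbf{w}_{pi}\|^2$, expanding the square and using the standard fact $\int |\nabla^2 u_i|^2 = \int (\Delta u_i)^2 = \Lambda_i$ (valid since both $u_i$ and $\nabla u_i$ vanish on $\partial \Omega$), the cross-terms telescope after summation to
$$\sum_{p=1}^n \|\Delta \mathbf{w}_{pi}\|^2 = (n-1)\Lambda_i.$$
For $\|\nabla \mathbf{w}_{pi}\|^2$, the boundary condition $\mathbf{w}_{pi}|_{\partial \Omega} = 0$ lets me integrate by parts to obtain $\|\nabla \mathbf{w}_{pi}\|^2 = -(\mathbf{w}_{pi}, \Delta \mathbf{w}_{pi})$; substituting the formula for $\Delta \mathbf{w}_{pi}$ and killing the $\nabla(\partial_p u_i)$ term by the orthogonality (2.4), what remains is $\int \Delta u_i\, w_{pi}^p$. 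Writing $w_{pi}^p = x^p \partial_p u_i - \partial_p h_{pi}$ and grinding through integration by parts—Green's formula to transfer $\Delta$ onto $\partial_p h_{pi}$, the explicit form of $\Delta h_{pi}$, and a Pohozaev-style manipulation against $x^p$—everything collapses to $\|\nabla \mathbf{w}_{pi}\|^2 = 1 - \|\partial_p u_i\|^2$, whence
$$\sum_{p=1}^n \|\nabla \mathbf{w}_{pi}\|^2 = n-1.$$

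Finally I would combine these via two Cauchy-Schwarz applications. Pointwise in $p$, Cauchy-Schwarz on $\|\nabla \mathbf{w}_{pi}\|^2 = -(\mathbf{w}_{pi}, \Delta \mathbf{w}_{pi})$ yields $\|\nabla \mathbf{w}_{pi}\|^4 \le \|\mathbf{w}_{pi}\|^2\,\|\Delta \mathbf{w}_{pi}\|^2$; Cauchy-Schwarz again in the index $p$ gives
$$(n-1)^2 = \Bigl(\sum_p \|\nabla \mathbf{w}_{pi}\|^2\Bigr)^2 \le \Bigl(\sum_p \|\mathbf{w}_{pi}\|^2\Bigr)\Bigl(\sum_p \|\Delta \mathbf{w}_{pi}\|^2\Bigr) = (n-1)\,\Lambda_i\sum_p \|\mathbf{w}_{pi}\|^2,$$
which rearranges to the lemma. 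I expect the main obstacle to be engineering the cancellation $\|\nabla \mathbf{w}_{pi}\|^2 = 1 - \|\partial_p u_i\|^2$: in particular the intermediate identity $\int \Delta u_i\,\partial_p h_{pi} = -\tfrac12$ requires both boundary conditions $u_i|_{\partial\Omega} = \partial_\nu u_i|_{\partial\Omega} = 0$ plus the precise expression for $\Delta h_{pi}$, and without this clean value the subsequent Cauchy-Schwarz chain would lose the sharp constant $n-1$.
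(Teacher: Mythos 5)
Your proposal is correct and follows essentially the same route as the paper: both hinge on the formula $\Delta\mathbf{w}_{pi}=\nabla(\partial_p u_i)-(\Delta u_i)e_p$, the two $p$-summed identities $\sum_p\|\nabla\mathbf{w}_{pi}\|^2=n-1$ and $\sum_p\|\Delta\mathbf{w}_{pi}\|^2=(n-1)\Lambda_i$, and the integration by parts $\|\nabla\mathbf{w}_{pi}\|^2=-(\mathbf{w}_{pi},\Delta\mathbf{w}_{pi})$. The only cosmetic differences are that the paper derives $\|\nabla\mathbf{w}_{pi}\|^2=1-\|\partial_p u_i\|^2$ via the cleaner antisymmetrization identity $\|\nabla\mathbf{w}_{pi}\|^2=\tfrac12\sum_{\alpha,\beta}\|\nabla_\beta w_{pi}^\alpha-\nabla_\alpha w_{pi}^\beta\|^2$ (valid because $\mathrm{div}\,\mathbf{w}_{pi}=0$), whereas you propose the more laborious direct computation of $\int\Delta u_i\,w_{pi}^p$ through $\int\Delta u_i\,\partial_p h_{pi}=-\tfrac12$ (which does check out); and your two-step Cauchy--Schwarz is precisely the optimized form of the paper's Young inequality with $\epsilon_i=\sqrt{(n-1)\Lambda_i/\sum_p\|\mathbf{w}_{pi}\|^2}$.
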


\begin{proof}
Since
\begin{equation}
\nabla_{\beta}(x^p\nabla_{\alpha} u_i)-
\nabla_{\alpha}(x^p\nabla_{\beta}u_i)=\nabla_\beta w_{pi}^{\alpha}
-\nabla_{\alpha} w_{pi}^{\beta},
\end{equation}
where $w_{pi}^{\alpha}=x^p\nabla_{\alpha}u_i-\nabla_{\alpha}h_{pi}$ denotes the $\alpha$-th component of ${\bf w}_{pi}$, we infer, from ${\text {div}} ({\bf w}_{pi})=0$,
\begin{equation}
\begin{aligned}
\|\nabla {\bf w}_{pi}\|^2=&\sum_{\alpha,\beta=1}^n\|
\nabla_{\alpha}w_{pi}^{\beta}\|^2\\
=&\dfrac 12\sum_{\alpha,\beta=1}^n
\|\nabla_{\beta} w_{pi}^{\alpha}
-\nabla_{\alpha} w_{pi}^{\beta}\|^2+\|{\text {div}} ({\bf w}_{pi})\|^2\quad  \\
=&\dfrac 12\sum_{\alpha,\beta=1}^n \|\nabla_{\beta}(x^p\nabla_{\alpha} u_i)-
\nabla_{\alpha}(x^p\nabla_{\beta}u_i)\|^2\\
=&1-\|\nabla_p u_i\|^2.
\end{aligned}
\end{equation}
Furthermore,  we have
\begin{equation*}
\begin{aligned}
&\Delta  w_{pi}^{\alpha}\\
&=\Delta (x^p\nabla_{\alpha}u_i-\nabla_{\alpha}h_{pi})\\
&=\Delta (x^p\nabla_{\alpha}u_i)-\nabla_{\alpha}\biggl(\text{div}(\nabla h_{pi})\biggl)\\
&=\Delta (x^p\nabla_{\alpha}u_i)-\nabla_{\alpha}\biggl(\text{div}(x^p\nabla u_i)\biggl)\\
&=\nabla_p\nabla_{\alpha}u_i-\nabla_{\alpha}x^p\Delta u_i.
\end{aligned}
\end{equation*}
Thus, we obtain
\begin{equation}
\begin{aligned}
&\Delta  {\bf w}_{pi}=\nabla\langle\nabla x^p, \nabla u_i\rangle-\Delta u_i\nabla x^p.
\end{aligned}
\end{equation}
For any positive constant $\epsilon_i$, we have 
\begin{equation}
\begin{aligned}
&\|\nabla {\bf w}_{pi}\|^2=-\int\langle {\bf w}_{pi}, \Delta {\bf w}_{pi}\rangle\\
&=-\int\langle {\bf w}_{pi}, \nabla\langle\nabla x^p, \nabla u_i\rangle-\Delta u_i\nabla x^p\rangle\\
&\le \dfrac{\epsilon_i}{2}\|{\bf w}_{pi}\|^2+\dfrac{1}{2\epsilon_i}\|  \nabla\langle\nabla x^p, \nabla u_i\rangle-\Delta u_i\nabla x^p\|^2.
\end{aligned}
\end{equation}
Since, from (2.17), 
$$
\sum_{p=1}^n\|\nabla {\bf w}_{pi}\|^2=n-1, \quad  \sum_{p=1}^n 
\|  \nabla\langle\nabla x^p, \nabla u_i\rangle\|^2=\Lambda_i,
$$
by taking sum on $p$ from $1$ to $n$ for (2.19), we have 
\begin{equation*}
(n-1)\leq \dfrac{\epsilon_i}2\sum_{p=1}^n\| {\bf w}_{pi}\|^2+\dfrac{n-1}{2\epsilon_i}\Lambda_i.
\end{equation*}
Putting 
$$
\epsilon_i=\sqrt{\dfrac{(n-1)\Lambda_i}{\sum_{p=1}^n\| {\bf w}_{pi}\|^2}}, 
$$
we obtain
\begin{equation*}
\Lambda_i \sum_{p=1}^n\| {\bf w}_{pi}\|^2\geq (n-1).
\end{equation*}
It completes the proof of the lemma 2.3.
\end{proof}

\vskip 5mm
\noindent
{\it Proof of Theorem 1.1.}   Since $\varphi_{pi}$  is a trial  function, 
 from the Rayleigh-Ritz inequality, we have
\begin{equation}
\Lambda_{k+1} \|\nabla \varphi_{pi}\|^2\leq \int  \varphi_{pi}\Delta^2 \varphi_{pi}
=-\int \nabla \varphi_{pi}\cdot\nabla(\Delta \varphi)_{pi}.
\end{equation}
By making use of the same arguments as in Cheng and Yang \cite{cy2}, we have,
for any $p$ and $i$, 
\begin{equation}
(\Lambda_{k+1}-\Lambda_i) \|\nabla\varphi_{pi}\|^2\le
1+3\|\nabla _p u_i\|^2
-\Lambda_i(\|u_i\|^2-\|{\bf w}_{pi}\|^2)+ \sum_{j=1}^k(\Lambda_i-\Lambda_j)
b_{pij}^2.
\end{equation}

\begin{equation}
1+ 2\sum_{j=1}^k b_{pij}c_{pij}
=-2\int_{\Omega} \langle \nabla\varphi_{pi} ,  \nabla\langle\nabla x^p, \nabla u_i\rangle\rangle,
\end{equation}
where
$$
c_{pij}=\int \langle\nabla\langle\nabla x^p, \nabla u_i\rangle,  \nabla u_j\rangle=-c_{pji}.
$$
Hence, we have, for any positive constant $\delta_i$,
$$
\aligned
&(\Lambda_{k+1}-\Lambda_i)^2(1+2\sum_{j=1}^k b_{pij}c_{pij})\\
&= (\Lambda_{k+1}-\Lambda_i)^2
\int_{\Omega} -2\langle \nabla\varphi_{pi} ,  \nabla\langle\nabla x^p, \nabla u_i\rangle
-\sum_{j=1}^k c_{pij}\nabla u_j\rangle\\
&\le  \delta_i(\Lambda_{k+1}-\Lambda_i)^3 \|\nabla\varphi_{pi}\|^2
+\frac 1 {\delta_i} (\Lambda_{k+1}-\Lambda_i)\left (
\|\nabla\langle\nabla x^p, \nabla u_i\rangle\|^2 -\sum_{j=1}^k c_{pij}^2
\right ).
\endaligned
$$
From (2.21) and $\| u_i\|^2=\|\nabla q_{pi}\|^2+\|{\bf w}_{pi}\|^2$, we obtain
\begin{equation}
\begin{aligned}
&(\Lambda_{k+1}-\Lambda_i)^2(1+2\sum_{j=1}^k b_{pij}c_{pij})\\
&\le \delta_i(\Lambda_{k+1}-\Lambda_i)^2\biggl(1+3\|\nabla _p u_i\|^2
-\Lambda_i\|\nabla q_{pi}\|^2+ \sum_{j=1}^k(\Lambda_i-\Lambda_j)
b_{pij}^2. \biggl)\\
&+\frac 1 {\delta_i}(\Lambda_{k+1}-\Lambda_i)
\left (\|\nabla\langle\nabla x^p, \nabla u_i\rangle\|^2 -\sum_{j=1}^k c_{pij}^2\right ).
\end{aligned}
\end{equation}
By taking sum on $p$ from $1$ to $n$,  we derive 
\begin{equation}
\begin{aligned}
&(\Lambda_{k+1}-\Lambda_i)^2(n+2\sum_{p=1}^n\sum_{j=1}^k b_{pij}c_{pij})\\
&\le \delta_i(\Lambda_{k+1}-\Lambda_i)^2\biggl(n+3
-\Lambda_i\sum_{p=1}^n\|\nabla q_{pi}\|^2+ \sum_{p=1}^n\sum_{j=1}^k(\Lambda_i-\Lambda_j)
b_{pij}^2. \biggl)\\
&+\frac 1 {\delta_i}(\Lambda_{k+1}-\Lambda_i)
\left (\Lambda_i -\sum_{p=1}^n\sum_{j=1}^k c_{pij}^2\right ).
\end{aligned}
\end{equation}
From  the lemma 2.2,  the lemma 2.3 and 
\begin{equation*}
\| u_i\|^2=\|\nabla q_{pi}\|^2+\|{\bf w}_{pi}\|^2,
\end{equation*}
we infer 
$$
\Lambda_i\sum_{p=1}^n\|\nabla q_{pi}\|^2\geq \dfrac53.
$$
Thus, we obtain, for any $i$, 
\begin{equation}
\begin{aligned}
&(\Lambda_{k+1}-\Lambda_i)^2(n+2\sum_{p=1}^n\sum_{j=1}^k b_{pij}c_{pij})\\
&\le \delta_i(\Lambda_{k+1}-\Lambda_i)^2\biggl(n+\dfrac 43+ \sum_{p=1}^n\sum_{j=1}^k(\Lambda_i-\Lambda_j)
b_{pij}^2 \biggl)\\
&+\frac 1 {\delta_i}(\Lambda_{k+1}-\Lambda_i)
\left (\Lambda_i -\sum_{p=1}^n\sum_{j=1}^k c_{pij}^2\right ).
\end{aligned}
\end{equation}
By taking sum for $i$ from $1$ to $k$ and noticing that $b_{pij}$
is symmetric and $c_{pij}$ is antisymmetric on $i, j$, we have
\begin{equation}
\begin{aligned}
&n\sum_{i=1}^k (\Lambda_{k+1}-\Lambda_i)^2
-2\sum_{p=1}^n\sum_{i,j=1}^k (\Lambda_{k+1}-\Lambda_i)(\Lambda_i-\Lambda_j)
b_{pij}c_{pij}\\
&\le (n+\dfrac43)\sum_{i=1}^k\delta_i(\Lambda_{k+1}-\Lambda_i)^2
+\sum_{i=1}^k\frac 1 {\delta_i}(\Lambda_{k+1}-
\Lambda_i)\Lambda_i\\
&-\sum_{p=1}^n\sum_{i,j=1}^k \delta_i(\Lambda_{k+1}-\Lambda_i)(\Lambda_i-\Lambda_j)^2b_{pij}^2
-\sum_{i,j=1}^k\dfrac1{\delta_i}(\Lambda_{k+1}-\Lambda_i)c_{pij}^2\\
&+\sum_{p=1}^n\sum_{i,j=1}^k\delta_i (\Lambda_{k+1}-\Lambda_i)(\Lambda_i-\Lambda_j)^2b_{pij}^2\\
&+\sum_{p=1}^n\sum_{i,j=1}^k \delta_i(\Lambda_{k+1}-\Lambda_i)^2(\Lambda_i-\Lambda_j)b_{pij}^2.
\end{aligned}
\end{equation}
Since, for a non-increasing  monotone  sequence $\{\delta_i\}_{i=1}^k$, 
\begin{equation*}
\begin{aligned}
&\sum_{p=1}^n\sum_{i,j=1}^k\delta_i (\Lambda_{k+1}-\Lambda_i)(\Lambda_i-\Lambda_j)^2b_{pij}^2
+\sum_{p=1}^n\sum_{i,j=1}^k \delta_i(\Lambda_{k+1}-\Lambda_i)^2(\Lambda_i-\Lambda_j)b_{pij}^2\\
&=\dfrac12\sum_{p=1}^n\sum_{i,j=1}^k (\Lambda_{k+1}-\Lambda_i)(\Lambda_{k+1}-\Lambda_j)(\Lambda_i-\Lambda_j)(\delta_i-\delta_j)b_{pij}^2\leq 0.
\end{aligned}
\end{equation*}
We conclude from (2.26) and the above formula, for a non-increasing  monotone 
sequence $\{\delta_i\}_{i=1}^k$,
$$
n\sum_{i=1}^k (\Lambda_{k+1}-\Lambda_i)^2
\le (n+\dfrac43)\sum_{i=1}^k\delta_i(\Lambda_{k+1}-\Lambda_i)^2
+\sum_{i=1}^k\dfrac 1 {\delta_i}  (\Lambda_{k+1}-
\Lambda_i)\Lambda_i.
$$
In particular,  putting 
$$
\delta_i=\dfrac {n}{2(n+\frac43)}
$$
for any $i$,  we obtain
$$
\sum_{i=1}^k (\Lambda_{k+1}-\Lambda_i)^2
\le
\frac {4(n+\frac43)}{n^2} \sum_{i=1}^k
(\Lambda_{k+1}-\Lambda_i)\Lambda_i.
$$
This finishes the proof of the theorem 1.1.
\begin{flushright}
$\Box$
\end{flushright}

\begin{Rem} If
 one  can prove, for any $i$, 
$$
\Lambda_i\sum_{p=1}^n\|\nabla q_{pi}\|^2\geq 3,
$$
one will infer
$$
\sum_{i=1}^k(\Lambda_{k+1}-\Lambda_i)^2\le\frac 4n
\sum_{i=1}^k(\Lambda_{k+1}-\Lambda_i)\Lambda_i,
$$
which solves the conjecture.
\end{Rem}

\section{Proof of the theorem 1.2}

For the unit sphere
$$
S^n(1)=\biggl\{(x^1, x^2, \cdots, x^{n+1}) \in \mathbf{R}^{n+1}; \sum_{i=1}^{n+1}(x^p)^2=1\biggl\},
$$
we denote the induced metric on $S^n(1)$ by the canonical metric $\langle \cdot, \cdot\rangle$ 
on $\mathbf{R}^{n+1}$ also. For any $p$, we have 
\begin{equation}
\nabla_i\nabla_jx^p=-g_{ij}x^p, \qquad \Delta x^p=-nx^p,
\end{equation}
where $g_{ij}$ denotes components of the metric tensor of $S^n(1)$.
Let $u_i$ be  the $i$-th orthonormal eigenfunction of the buckling  problem
(1.2) corresponding to the eigenvalue $\Lambda_i$,
namely,  $ u_i$ satisfies
\begin{equation}
\begin{cases}
&\Delta^2u_i  =-\Lambda_i \Delta u_i \quad in \ \ \Omega ,\\
& u_i |_{\partial \Omega}=\left .\frac {\partial u_i}{\partial \nu}
\right|_{\partial \Omega} =0\\
&(u_i,u_j)_D=\int_{\Omega} \langle\nabla u_i, \nabla u_j \rangle=\delta_{ij}.
\end{cases}
\end{equation}
For constructing trial functions, we use the same notations as in the 
section 2. We would like to  remark that  vector-valued functions in this section 
have  $n+1$ components. Although the orders of differentiations of functions
in the Euclidean space can be exchanged freely, we must do it very carefully for  the 
covariant differentiations of functions in the case of the unit sphere.

Since  $x^p$ for $p=1, 2, \cdots, n+1$ is a coordinate function
of $\mathbf {R}^{n+1}$,
for  the vector-valued function  $x^p \nabla u_i, i=1, \dots, k$, we decompose
it into
\begin{equation}
x^p \nabla u_i= \nabla h_{pi} + {\bf w}_{pi},
\end{equation}
where  $h_{pi} \in H^2_{2,D}(\Omega)$ and $\nabla h_{pi}$ is the projection of $x^p\nabla u_i $
onto ${\bf H}^2_{1,D}(\Omega)$ and
${\bf w}_{pi} \perp H^2_{1,D} (\Omega)$. Thus, 
we have, for any function $h\in C^1(\Omega)\cap L^2(\Omega)$,
\begin{equation}
( {\bf w}_{pi} , \nabla h )=0.
\end{equation}
Hence,  ${\bf w}_{pi}$ satisfies
\begin{equation}
\begin{cases}
&{\bf w}_{pi}|_{\partial \Omega}=0,\\
&\|\text { div}{\bf w}_{pi}\|^2=0.
\end{cases}
\end{equation}
We define function $\varphi_{pi}$ by
\begin{equation}
\varphi_{pi}=h_{pi}-\sum_{j=1}^k b_{pij}u_j,
\end{equation}
where 
$$
b_{pij}=\int x^p\langle\nabla u_i, \nabla u_j\rangle=b_{pji}.
$$ 
It is easy to check  that 
$\varphi_{pi}$ satisfies
\begin{equation*}
\varphi_{pi}|_{\partial \Omega}
=\frac{\partial\varphi_{pi}}{\partial\nu}|_{\partial\Omega}=0\ \
\text{and} \ \ (\varphi_{pi},u_j)_D=(\nabla \varphi_{pi},\nabla u_j)=0,
\end{equation*}
for any $j=1, 2, \cdots, k$, that is, $\varphi_{pi}$ is a trial  function.
Since $\sum_{p=1}^{n+1}(x^p)^2=1$, from (3.3), we have, for any  $i$, 
\begin{equation}
1=  \sum_{p=1}^{n+1} \|\nabla h_{pi}\|^2 +  \sum_{p=1}^{n+1}\|{\bf w}_{pi}\|^2.
\end{equation}
\begin{lemma} For any $i$, we have 
\begin{equation}
 \sum_{p=1}^{n+1}\|{\bf w}_{pi}\|^2 \leq \dfrac{\Lambda_i-(n-1)}{\Lambda_i-(n-2)}.
\end{equation}
\end{lemma}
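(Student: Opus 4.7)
The plan mirrors Cheng--Yang's Euclidean Lemma 2.3, with corrections due to the Ricci curvature of $S^n(1)$.

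First, I would compute $\Delta\mathbf{w}_{pi}$ on the sphere. Using $\nabla_i\nabla_j x^p = -g_{ij}x^p$ and $\Delta x^p = -nx^p$, the Bochner commutator $\Delta(\nabla f) = \nabla(\Delta f) + (n-1)\nabla f$, and the formula $\Delta h_{pi} = x^p\Delta u_i + \langle\nabla x^p,\nabla u_i\rangle$ (a consequence of $\mathrm{div}\,\mathbf{w}_{pi}=0$), a direct calculation yields
\begin{equation*}
\Delta\mathbf{w}_{pi} = -(n-2)x^p\nabla u_i + \nabla\langle\nabla x^p,\nabla u_i\rangle - \Delta u_i\,\nabla x^p + (n-1)\mathbf{w}_{pi}.
\end{equation*}

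Next I would derive two identities. Pairing the display with $\mathbf{w}_{pi}$ in $L^2$, using $({\bf w}_{pi},\nabla f)=0$, $\int x^p\langle\mathbf{w}_{pi},\nabla u_i\rangle = \|\mathbf{w}_{pi}\|^2$, and $\int\langle\mathbf{w}_{pi},\Delta\mathbf{w}_{pi}\rangle = -\|\nabla\mathbf{w}_{pi}\|^2$ (boundary terms vanish) gives
$\|\mathbf{w}_{pi}\|^2 + \|\nabla\mathbf{w}_{pi}\|^2 = \int\Delta u_i\,\langle\mathbf{w}_{pi},\nabla x^p\rangle$. Separately, since scalar Hessians are symmetric, $\nabla_\beta w_{pi}^\alpha - \nabla_\alpha w_{pi}^\beta = \nabla_\beta x^p\nabla_\alpha u_i - \nabla_\alpha x^p\nabla_\beta u_i$; combined with $\mathrm{div}\,\mathbf{w}_{pi}=0$ and the Ricci identity $\int\nabla_\beta w_{pi}^\alpha\,\nabla_\alpha w_{pi}^\beta = -(n-1)\|\mathbf{w}_{pi}\|^2$ valid on $S^n(1)$, one obtains $\|\nabla\mathbf{w}_{pi}\|^2 + (n-1)\|\mathbf{w}_{pi}\|^2 = \int|\nabla x^p|^2|\nabla u_i|^2 - \|\langle\nabla x^p,\nabla u_i\rangle\|^2$.

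Summing both identities over $p=1,\dots,n+1$, with $\sum_p |\nabla x^p|^2 = n$ and $\sum_p\langle\nabla x^p,v\rangle^2 = |v|^2$ for tangent $v$ (these follow from the embedding $S^n\hookrightarrow\mathbf{R}^{n+1}$), gives $\sum_p\|\nabla\mathbf{w}_{pi}\|^2 = (n-1)(1-S)$, where $S := \sum_p\|\mathbf{w}_{pi}\|^2$, and in turn the master identity
\begin{equation*}
(n-1) - (n-2)S \;=\; \sum_{p=1}^{n+1}\int\Delta u_i\,\langle\mathbf{w}_{pi},\nabla x^p\rangle \;=\; -\sum_{p=1}^{n+1}\int\Delta u_i\,\langle\nabla h_{pi},\nabla x^p\rangle,
\end{equation*}
the last form via $\sum_p x^p\nabla x^p = 0$. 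The lemma will follow once I establish the linear estimate $(n-1)-(n-2)S \leq \Lambda_i(1-S)$, which rearranges directly to $S \leq (\Lambda_i-(n-1))/(\Lambda_i-(n-2))$.

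This last linear bound is the crux. A weighted Cauchy--Schwarz applied to the right side, using $\sum_p\|\Delta u_i\nabla x^p\|^2 = n\Lambda_i$ (from $\|\Delta u_i\|^2 = \Lambda_i$, a consequence of integrating $\int u_i\Delta^2 u_i = -\Lambda_i\int u_i\Delta u_i$ twice by parts) and $\sum_p\|\nabla h_{pi}\|^2 = 1-S$, only produces the square-root bound $(n-1)-(n-2)S \le \sqrt{n\Lambda_i(1-S)}$, which after squaring gives merely a lower bound on $S$ rather than the required upper bound. Obtaining the sharp linear inequality $\Lambda_i(1-S)$ will, I expect, require a further integration by parts that exploits the specific structure $\nabla h_{pi}\in \mathbf{H}^2_{1,D}$ (so that $\nabla h_{pi} = \sum_j b_{pij}\nabla u_j$ with $\sum_p\sum_j\Lambda_j b_{pij}^2 = \sum_p\|\Delta h_{pi}\|^2 = \Lambda_i+1$) together with the curvature identity $\|\nabla^2 u_i\|^2 = \Lambda_i - (n-1)$ on $S^n(1)$ (obtained by integrating the Bochner formula for $u_i$ against itself and using $\nabla u_i|_{\partial\Omega} = 0$). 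Threading these two pieces of spectral and curvature information through the master identity to produce the clean linear bound is the main technical challenge.
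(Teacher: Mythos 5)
Your argument is not a proof: it reduces the lemma to the linear inequality $(n-1)-(n-2)S\le \Lambda_i(1-S)$, where $S=\sum_{p=1}^{n+1}\|{\bf w}_{pi}\|^2$ (this reduction is indeed equivalent to the stated bound when $\Lambda_i>n-2$), but then you concede that you cannot establish that inequality, and the Cauchy--Schwarz step you do carry out, using $\sum_p\|\Delta u_i\,\nabla x^p\|^2=n\Lambda_i$, only yields $(n-1)-(n-2)S\le\sqrt{n\Lambda_i(1-S)}$, which is strictly weaker than the lemma (e.g.\ for $n=2$ it gives $S\le 1-\tfrac1{2\Lambda_i}$ instead of $S\le 1-\tfrac1{\Lambda_i}$). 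The closing paragraph is a program, not an argument, and one of its ingredients is shaky: $\nabla h_{pi}$ lies in the closed span of \emph{all} $\{\nabla u_j\}_{j=1}^{\infty}$, not of the first $k$, and the claim $\sum_p\|\Delta h_{pi}\|^2=\Lambda_i+1$ is unjustified on the sphere. So the crux of the lemma is exactly the step you leave open.

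What closes the gap in the paper is a different pairing, which avoids the lossy factor $n\Lambda_i$ altogether. Start from $1=\sum_{p=1}^{n+1}\|\langle\nabla x^p,\nabla u_i\rangle\|^2$ (a consequence of $\sum_p(x^p)^2=1$) and integrate by parts: the term containing $x^p\Delta u_i$ drops out because $\sum_p x^p\nabla x^p=0$, and replacing $x^p\nabla u_i$ by $\nabla h_{pi}$ (legitimate since ${\bf w}_{pi}$ is orthogonal to gradients) gives the identity
\begin{equation*}
1=-\sum_{p=1}^{n+1}\int\bigl\langle\nabla h_{pi},\,\nabla\langle\nabla x^p,\nabla u_i\rangle\bigr\rangle .
\end{equation*}
The Bochner-type computation you would need anyway, namely $\Delta\langle\nabla x^p,\nabla u_i\rangle=-2x^p\Delta u_i+\langle\nabla x^p,\nabla(\Delta u_i)\rangle+(n-2)\langle\nabla x^p,\nabla u_i\rangle$, yields $\sum_p\|\nabla\langle\nabla x^p,\nabla u_i\rangle\|^2=\Lambda_i-(n-2)$, which is the decisive point: this total is $\Lambda_i-(n-2)$, not $n\Lambda_i$. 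Then Young's inequality $1\le\epsilon_i\sum_p\|\nabla h_{pi}\|^2+\tfrac1{4\epsilon_i}\bigl(\Lambda_i-(n-2)\bigr)$ together with $\sum_p\|\nabla h_{pi}\|^2=1-S$ and the optimal choice $\epsilon_i=\tfrac{\Lambda_i-(n-2)}2$ gives exactly $1\le(\Lambda_i-(n-2))(1-S)$, which is the linear bound you were seeking; no computation of $\Delta{\bf w}_{pi}$, no squaring, and no information about $\Delta h_{pi}$ is needed. If you want to salvage your master identity, you would have to find a pairing whose summed squared norm is $\Lambda_i-(n-2)$ rather than $n\Lambda_i$ -- which is precisely what the pairing above accomplishes.
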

\begin{proof}
From  $\sum_{p=1}^{n+1}(x^p)^2=1$, we have 
\begin{equation*}
\begin{aligned}
1&=  \sum_{p=1}^{n+1} \|\langle\nabla x^p,\nabla u_i\rangle\|^2\\
&= - \sum_{p=1}^{n+1} \int x^p\text{div}\{\langle\nabla x^p,\nabla u_i\rangle\nabla u_i\}\\
&=- \sum_{p=1}^{n+1} \int x^p\langle\nabla x^p,\nabla u_i\rangle\Delta u_i
- \sum_{p=1}^{n+1} \int \langle x^p\nabla u_i, \nabla\langle\nabla x^p,\nabla u_i\rangle\rangle\\
&=- \sum_{p=1}^{n+1} \int \langle \nabla h_{pi}, \nabla\langle\nabla x^p,\nabla u_i\rangle\rangle.\\
\end{aligned}
\end{equation*}
For any positive constant $\epsilon_i$, we have 
\begin{equation}
1\leq \epsilon_i \sum_{p=1}^{n+1} \|\nabla h_{pi}\|^2 
+ \dfrac 1{4\epsilon_i} \sum_{p=1}^{n+1}\|\nabla \langle \nabla x^p,\nabla u_i\rangle\|^2
\end{equation}
According to the following Bochner formula for a smooth function $f$:
\begin{equation*}
\begin{aligned}
\dfrac12\Delta |\nabla f|^2&
=|\nabla^2f|^2+\langle\nabla f,\nabla (\Delta f)\rangle +\text{Ric}(\nabla f, \nabla f)\\
&=|\nabla^2f|^2+\langle\nabla f,\nabla (\Delta f)\rangle +(n-1)|\nabla f|^2,\\
\end{aligned}
\end{equation*}
where Ric and $\nabla^2f$ denote the Ricci tensor of $S^n(1)$   and the Hessian of $f$,
respectively, we can derive, from (3.1) and by making use of a direct computation, 
\begin{equation}
\Delta  \langle \nabla x^p,\nabla u_i\rangle=
-2x^p\Delta u_i+ \langle \nabla x^p,\nabla (\Delta u_i)\rangle +(n-2) \langle \nabla x^p,\nabla u_i\rangle.
\end{equation} 
Hence, we have 
\begin{equation*}
\begin{aligned}
&\sum_{p=1}^{n+1}\|\nabla \langle \nabla x^p,\nabla u_i\rangle\|^2\\
&=-\sum_{p=1}^{n+1} \int \langle \nabla x^p,\nabla u_i\rangle\Delta  \langle x^p,\nabla u_i\rangle\\
&=-\sum_{p=1}^{n+1} \int \langle \nabla x^p,\nabla u_i\rangle\biggl\{ -2x^p\Delta u_i+ \langle \nabla x^p,\nabla (\Delta u_i)\rangle +(n-2) \langle \nabla x^p,\nabla u_i\rangle \biggl\}\\
&=-\sum_{p=1}^{n+1} \biggl\{\int \langle\nabla x^p,\nabla u_i\rangle\langle \nabla x^p,\nabla (\Delta u_i)\rangle +(n-2) \langle \nabla x^p,\nabla u_i\rangle^2 \biggl\}\\
&=-\int \langle\nabla u_i,\nabla (\Delta u_i)\rangle -(n-2) \|\nabla u_i\|2\\
&=\Lambda_i-(n-2),
\end{aligned}
\end{equation*}
that is,
\begin{equation}
\begin{aligned}
&\sum_{p=1}^{n+1}\|\nabla \langle \nabla x^p,\nabla u_i\rangle\|^2=\Lambda_i-(n-2).
\end{aligned}
\end{equation}
Here we have used
$$
\sum_{p=1}^{n+1} \int \langle\nabla x^p,\nabla u_i\rangle\langle \nabla x^p,\nabla (\Delta u_i)\rangle
=\int \langle\nabla u_i,\nabla (\Delta u_i)\rangle. 
$$
Therefore, from (3.9), we obtain
\begin{equation*}
1\leq \epsilon_i \sum_{p=1}^{n+1} \|\nabla h_{pi}\|^2 
+ \dfrac 1{4\epsilon_i}\biggl(\Lambda_i-(n-2)\biggl)
\end{equation*}
From (3.7), we have 
\begin{equation*}
1+ \epsilon_i \sum_{p=1}^{n+1} \|{\bf w}_{pi}\|^2 
\leq \epsilon_i+ \dfrac 1{4\epsilon_i}\biggl(\Lambda_i-(n-2)\biggl).
\end{equation*}
Taking 
$$
\epsilon_i=\dfrac{\Lambda_i-(n-2)}2,
$$
we complete the proof of the lemma 3.1.
\end{proof}

\vskip 5mm
\noindent
{\it Proof of Theorem 1.2.}
By making use of the trial function $\varphi_{pi}$ and the same argumants
as in Wang and Xia \cite{wx}, we have, for any $p$ and $i$,
\begin{equation}
(\Lambda_{k+1}-\Lambda_i) \|\nabla\varphi_{pi}\|^2\le P_{pi} 
+\|\langle \nabla x^p,\nabla u_i\rangle\|^2
+\Lambda_i\|{\bf w}_{pi}\|^2+ \sum_{j=1}^k(\Lambda_i-\Lambda_j)
b_{pij}^2,
\end{equation}
where
$$
P_{pi}=\int \langle \nabla (x^p)^2,u_i\nabla (\Delta u_i)+\Lambda_iu_i\nabla u_i\rangle.
$$
Defining 
$$
Z_{pi}=\nabla \langle \nabla x^p,\nabla u_i\rangle-\dfrac{n-2}2x^p\nabla u_i, 
$$
$$
c_{pij}=\int \langle \nabla u_j, Z_{pi}\rangle =-c_{pji}
$$
has been proved in Wang and Xia \cite{wx}. Since 
\begin{equation*}
\begin{aligned}
\gamma_{pi}&=-2\int \langle x^p\nabla u_i, Z_{pi}\rangle\\
&=-2\int \langle \nabla h_{pi}+{\bf w}_{pi}, Z_{pi}\rangle\\
&=-2\int \langle \nabla \varphi_{pi}+\sum_{j=1}^kb_{pij}\nabla u_j+{\bf w}_{pi}, Z_{pi}\rangle\\
&=-2\int \langle \nabla \varphi_{pi}, Z_{pi}-\sum_{j=1}^kc_{pij}\nabla u_j\rangle
-2\sum_{j=1}^kb_{pij}c_{pij}+(n-2)\|{\bf w}_{pi}\|^2,
\end{aligned}
\end{equation*}
we have 
\begin{equation*}
\begin{aligned}
\gamma_{pi}+2\sum_{j=1}^kb_{pij}c_{pij}
=-2\int \langle \nabla \varphi_{pi}, Z_{pi}-\sum_{j=1}^kc_{pij}\nabla u_j\rangle
+(n-2)\|{\bf w}_{pi}\|^2.
\end{aligned}
\end{equation*}
Hence, for any positive constant $\delta_i$, we have, according to  (3.12),  
\begin{equation}
\begin{aligned}
&(\Lambda_{k+1}-\Lambda_i)^2\biggl(\gamma_{pi}+2\sum_{j=1}^k b_{pij}c_{pij}\biggl)
-(n-2)(\Lambda_{k+1}-\Lambda_i)^2\|{\bf w}_{pi}\|^2\\
&\le  \delta_i(\Lambda_{k+1}-\Lambda_i)^3 \|\nabla\varphi_{pi}\|^2
+\frac 1 {\delta_i} (\Lambda_{k+1}-\Lambda_i)\left (
\|Z_{pi}\|^2 -\sum_{j=1}^k c_{pij}^2\right )\\
&\leq  \delta_i(\Lambda_{k+1}-\Lambda_i)^2\biggl\{ P_{pi} 
+\|\langle \nabla x^p,\nabla u_i\rangle\|^2
+\Lambda_i\|{\bf w}_{pi}\|^2+ \sum_{j=1}^k(\Lambda_i-\Lambda_j)
b_{pij}^2\biggl\}\\
&+\frac 1 {\delta_i} (\Lambda_{k+1}-\Lambda_i)\left (
\|Z_{pi}\|^2 -\sum_{j=1}^k c_{pij}^2\right ).\\
\end{aligned}
\end{equation}
By taking sum on $p$ from $1$ to $n$,  we derive 
\begin{equation}
\begin{aligned}
&(\Lambda_{k+1}-\Lambda_i)^2\sum_{p=1}^{n+1}\biggl(\gamma_{pi}+2\sum_{j=1}^k b_{pij}c_{pij}\biggl)-(n-2)(\Lambda_{k+1}-\Lambda_i)^2\sum_{p=1}^{n+1}\|{\bf w}_{pi}\|^2\\
&\leq  \delta_i(\Lambda_{k+1}-\Lambda_i)^2\sum_{p=1}^{n+1}\biggl\{ P_{pi} 
+\|\langle \nabla x^p,\nabla u_i\rangle\|^2\\
&+\Lambda_i\|{\bf w}_{pi}\|^2+ \sum_{j=1}^k(\Lambda_i-\Lambda_j)
b_{pij}^2\biggl\}\\
&+\frac 1 {\delta_i} (\Lambda_{k+1}-\Lambda_i)\sum_{p=1}^{n+1}\left (
\|Z_{pi}\|^2 -\sum_{j=1}^k c_{pij}^2\right ).\\
\end{aligned}
\end{equation}
Since 
\begin{equation*}
\begin{aligned}
\gamma_{pi}&=-2\int \langle x^p\nabla u_i, Z_{pi}\rangle\\
&=-2\int \langle x^p\nabla u_i, \nabla \langle \nabla x^p,\nabla u_i\rangle-\dfrac{n-2}2x^p\nabla u_i\rangle\\
&=2\int \langle\nabla  x^p, \nabla u_i\rangle^2+2\int \Delta u_i \langle x^p\nabla x^p,\nabla u_i\rangle
+(n-2)\int (x^p)^2\langle \nabla u_i,\nabla u_i\rangle,\\
\end{aligned}
\end{equation*}
we have 
\begin{equation*}
\begin{aligned}
\sum_{p=1}^{n+1}\gamma_{pi}=n
\end{aligned}
\end{equation*}
From the definition of $Z_{pi}$, we have 
\begin{equation*}
\begin{aligned}
&\sum_{p=1}^{n+1}\|Z_{pi}\|^2\\
&=\sum_{p=1}^{n+1}\int|\nabla \langle \nabla x^p,\nabla u_i\rangle-\dfrac{n-2}2x^p\nabla u_i|^2\\
&=\sum_{p=1}^{n+1}\biggl\{\|\nabla \langle \nabla x^p,\nabla u_i\rangle\|^2
-(n-2)\int \langle \nabla \langle \nabla x^p,\nabla u_i\rangle, x^p\nabla u_i\rangle
+\dfrac{(n-2)^2}4\|x^p\nabla u_i\|^2\biggl\}\\
&=\Lambda_i+\frac{(n-2)^2}4 \ \ (\text{from} \ (3.11)).
\end{aligned}
\end{equation*}
Since $P_{pi}=\int \langle \nabla (x^p)^2,u_i\nabla (\Delta u_i)+\Lambda_iu_i\nabla u_i\rangle$,
we have 
$$
\sum_{p=1}^{n+1}P_{pi}=0.
$$
From the lemma 3.1 and (3.14), we obtain
\begin{equation*}
\begin{aligned}
&(\Lambda_{k+1}-\Lambda_i)^2\biggl(n+2\sum_{p=1}^{n+1}\sum_{j=1}^k b_{pij}c_{pij}\biggl)
-(n-2)(\Lambda_{k+1}-\Lambda_i)^2\dfrac{\Lambda_i-(n-1)}{\Lambda_i-(n-2)}\\
&\leq  \delta_i(\Lambda_{k+1}-\Lambda_i)^2\biggl\{ 1
+\Lambda_i\dfrac{\Lambda_i-(n-1)}{\Lambda_i-(n-2)}+ \sum_{p=1}^{n+1}\sum_{j=1}^k(\Lambda_i-\Lambda_j)
b_{pij}^2\biggl\}\\
&+\frac 1 {\delta_i} (\Lambda_{k+1}-\Lambda_i)\biggl(\Lambda_i+\frac{(n-2)^2}4\biggl)
-\frac 1 {\delta_i} (\Lambda_{k+1}-\Lambda_i)\sum_{p=1}^{n+1}\sum_{j=1}^k c_{pij}^2,\\
\end{aligned}
\end{equation*}
that is,
\begin{equation}
\begin{aligned}
&2(\Lambda_{k+1}-\Lambda_i)^2+(n-2)\dfrac{(\Lambda_{k+1}-\Lambda_i)^2}{\Lambda_i-(n-2)}\\
&\leq  \delta_i(\Lambda_{k+1}-\Lambda_i)^2\biggl\{\Lambda_i-\dfrac{(n-2)}{\Lambda_i-(n-2)}\biggl\}
+\frac 1 {\delta_i} (\Lambda_{k+1}-\Lambda_i)\biggl(\Lambda_i+\frac{(n-2)^2}4\biggl)\\
&-2(\Lambda_{k+1}-\Lambda_i)^2\sum_{p=1}^{n+1}\sum_{j=1}^k b_{pij}c_{pij}
+ \delta_i(\Lambda_{k+1}-\Lambda_i)^2\sum_{p=1}^{n+1}\sum_{j=1}^k(\Lambda_i-\Lambda_j)b_{pij}^2\\
&-\frac 1 {\delta_i} (\Lambda_{k+1}-\Lambda_i)\sum_{p=1}^{n+1}\sum_{j=1}^k c_{pij}^2.\\
\end{aligned}
\end{equation}
Since, for a non-increasing  monotone  sequence $\{\delta_i\}_{i=1}^k$, 
\begin{equation*}
\begin{aligned}
&\sum_{p=1}^n\sum_{i,j=1}^k\delta_i (\Lambda_{k+1}-\Lambda_i)(\Lambda_i-\Lambda_j)^2b_{pij}^2
+\sum_{p=1}^n\sum_{i,j=1}^k \delta_i(\Lambda_{k+1}-\Lambda_i)^2(\Lambda_i-\Lambda_j)b_{pij}^2\\
&=\dfrac12\sum_{p=1}^n\sum_{i,j=1}^k (\Lambda_{k+1}-\Lambda_i)(\Lambda_{k+1}-\Lambda_j)(\Lambda_i-\Lambda_j)(\delta_i-\delta_j)b_{pij}^2\leq 0
\end{aligned}
\end{equation*}
and 
\begin{equation*}
\begin{aligned}
&-2\sum_{i=1}^k(\Lambda_{k+1}-\Lambda_i)^2\sum_{p=1}^{n+1}\sum_{j=1}^k b_{pij}c_{pij}
-\sum_{i=1}^k\delta_i (\Lambda_{k+1}-\Lambda_i)\sum_{p=1}^n\sum_{j=1}^k(\Lambda_i-\Lambda_j)^2b_{pij}^2\\
&-\sum_{i=1}^k\frac 1 {\delta_i} (\Lambda_{k+1}-\Lambda_i)\sum_{p=1}^{n+1}\sum_{j=1}^k c_{pij}^2\\
&=-\sum_{p=1}^n\sum_{i,j=1}^k
\biggl(\sqrt{\delta_i (\Lambda_{k+1}-\Lambda_i)}(\Lambda_i-\Lambda_j)b_{pij}
-\frac 1 {\sqrt{\delta_i}} \sqrt{(\Lambda_{k+1}-\Lambda_i)}c_{pij}\biggl)^2\leq 0,
\\
\end{aligned}
\end{equation*}
by taking sum on $i$ from $1$ to $k$ for (3.15), we obtain
\begin{equation}
\begin{aligned}
&2\sum_{i=1}^k(\Lambda_{k+1}-\Lambda_i)^2+(n-2)\sum_{i=1}^k\dfrac{(\Lambda_{k+1}-\Lambda_i)^2}{\Lambda_i-(n-2)}\\
&\leq \sum_{i=1}^k \delta_i(\Lambda_{k+1}-\Lambda_i)^2
\biggl\{\Lambda_i-\dfrac{(n-2)}{\Lambda_i-(n-2)}\biggl\}\\
&+\sum_{i=1}^k\frac 1 {\delta_i} (\Lambda_{k+1}-\Lambda_i)\biggl(\Lambda_i+\frac{(n-2)^2}4\biggl).\\
\end{aligned}
\end{equation}
It completes the proof of the theorem 1.2.
\begin{flushright}
$\Box$
\end{flushright}


\begin{thebibliography}{11}
\bibitem{ag}
S. Agmon, {On kernels, eigenvalues and eigenfunctions of
operators related to elliptic problems}, Comm. Pure Appl. Math. 18
(1965), 627-663.

\bibitem{ash1}
M.  S. Ashbaugh, Isoperimetric and universal
inequalities for eigenvalues, in Spectral theory and geometry
 (Edinburgh, 1998), E. B.
Davies and Yu Safarov eds., London Math. Soc. Lecture Notes, vol.
{\bf 273},   Cambridge Univ. Press, Cambridge, 1999, pp. 95-139.

 \bibitem {ash2} M.  S.  Ashbaugh, On universal inequalities for the low eigenvalues 
 of the buckling problem. Partial differential equations and inverse problems, 13--31, 
 Contemp. Math., 362, Amer. Math. Soc., Providence, RI, 2004. 
 \bibitem{ah}  M.  S.  Ashbaugh and L.  Hermi,  A unified approach to universal inequalities for eigenvalues of elliptic operators,  Pacific J. Math. {\bf 217}  (2004),  201-219. 
 
 \bibitem{cc}
 D. Chen and Q. -M. Cheng,
Extrinsic estimates for eigenvalues of the Laplace operator,
J. Math. Soc. Japan {\bf 60}  (2008), 325--339. 


\bibitem {cim} Q. -M. Cheng, T. Ichikawa and S. Mametsuka, Estimates for eigenvalues of a  clamped plate problem on Riemannian manifolds,  J. Math. Soc. Japan, {\bf 62} (2010), 673-686.

\bibitem {cy1} Q. -M. Cheng and H. C. Yang, Estimates on Eigenvalues of Laplacian,
Math. Ann.,  {\bf 331} (2005),  445-460.

 \bibitem {cy2}Q. -M. Cheng and H. C. Yang,
Universal bounds for eigenvalues of a buckling problem, Commn. Math. Phys., 
{\bf 262} (2006), 663-675.

\bibitem {cy3}Q. -M. Cheng and H. C. Yang, 
Bounds on eigenvalues of Dirichlet Laplacian, Math. Ann., {\bf 337} (2007), 159--175. 



\bibitem {cy4} Q. -M. Cheng and H. C. Yang, 
Estimates for eigenvalues  on Riemannian manifolds, J. Diff. Eqns.,  {\bf 247} (2009),
2270-2281.

\bibitem {CY5} Q. -M. Cheng and H. C. Yang,
Universal inequalities for eigenvalues of a clamped plate problem on
a hyperbolic space, Proc. Amer. Math. Soc.,  {\bf 139} (2011), 461-471.

 \bibitem {ehi} A. El Soufi, E. M. Harrell II and S. Ilias,
Universal inequalities for the eigenvalues of Laplace and
Schr\"odinger operators on submanifolds, Trans. Amer. Math. Soc., {\bf  361} (2009),   2337-2350.

\bibitem {h}  E. M. Harrell II,
 Commutators, eigenvalue gaps and mean curvature in the theory of Schr\"odinger
 operators,  Comm. Part. Diff. Eqns., {\bf 32} (2007), 401-413.


\bibitem {hy} G. N. Hile and R. Z. Yeh, Inequalities for
 eigenvalues of the biharmonic operator, Pacific  J.  Math. {\bf 112 }
 (1984), 115-133.
 
 \bibitem{lp}
H. A. Levine and M. H. Protter,  {Unrestricted lower bounds for
eigenvalues for classes of elliptic equations and systems of
equations with applications to problems in elasticity}, Math.
Methods Appl. Sci. 7 (1985), no. 2, 210-222.

\bibitem{ly}
P. Li and S. T. Yau,   {On the Schr\"odinger equations and the
eigenvalue problem}, Comm. Math. Phys. 88 (1983), 309-318.

\bibitem{ppw1}
L. E. Payne, G. Polya and H. F. Weinberger, Sur le quotient de deux fr\'equences propres cons\'ecutives, 
Comptes Rendus Acad. Sci. Paris, {\bf 241}  (1955), 917--919.


\bibitem{ppw2}
L. E. Payne,  G. P\'olya and H. F. Weinberger, On the ratio of
consecutive eigenvalues, J. Math. and Phys. {\bf 35} (1956), 289--298.

\bibitem{ple}
A. Pleijel,  {On the eigenvalues and eigenfunctions of elastic
plates}, Comm. Pure Appl. Math., 3 (1950), 1-10.

\bibitem{wx}
Q.  Wang and C. Xia,  Universal inequalities for eigenvalues of the buckling problem on spherical domains, 
Commn. Math. and Phys. {\bf 270}   (2007),  759-775.

\bibitem{wx2}
Q. Wang and C. Xia,
Universal bounds for eigenvalues of the biharmonic operator on Riemannian manifolds,  J. Funct.  Anal.,
 {\bf 245} (2007), 334-352.
 
\end{thebibliography}
\end{document}